%%%%%%%%%%%%%%%%%%%%%%%%
\documentclass[12pt]{amsart}
\usepackage{a4wide,amsfonts,graphicx,
amssymb,stmaryrd,amscd,amsmath,latexsym,amsbsy,xypic}
\xyoption{all}
\usepackage{marginnote}

%\numberwithin{equation}{section}
\theoremstyle{plain}

\newtheorem{thm}{Theorem}%[section]

\newtheorem{prop}[thm]{Proposition}

\newtheorem{defi}[thm]{Definition}
\newtheorem{lem}[thm]{Lemma}
\newtheorem{cor}[thm]{Corollary}

\theoremstyle{remark}
\newtheorem{rema}[thm]{Remark}
\newtheorem{eg}[thm]{Example}

\title{Some remarks on non-symmetric interpolation Macdonald polynomials}
\author{Siddhartha Sahi \& Jasper Stokman}
\address{S. Sahi, Department of Mathematics, Rutgers University, 110 Frelinghhuysen Rd,
Piscataway, NJ 08854-8019, USA.}
\email{sahi@math.rutgers.edu}
\address{J. Stokman, KdV Institute for Mathematics, University of Amsterdam,
Science Park 105-107, 1098 XG Amsterdam, The Netherlands.}
\email{j.v.stokman@uva.nl}
\begin{document}
\begin{abstract}
We provide elementary identities relating the three known types of non-symmetric interpolation Macdonald polynomials. In addition we derive a duality for non-symmetric interpolation Macdonald polynomials. We consider some applications of these results, in particular to binomial formulas involving non-symmetric interpolation Macdonald polynomials.
\end{abstract}
\vspace{2em}
\keywords{}
%%%%%%%%%%%%%%%%%%%%%%%%%%%%%%%
%%%%%%%%%%%%%%%%%%%%%%%%%%%%%%%
\maketitle
%%%%%%%%%%%%%%%%%%%%%%%%%%%%%%%%%%%%%%%%%%%%

\section{Introduction}
%%%%%%%%%%%%%%%%%%%%%%%%%%%%%%%%%%%%%%%%%%%
The symmetric interpolation Macdonald polynomials $R_{\lambda}(x;q,t) =R_{\lambda}(x_{1},\ldots ,x_{n};q,t) $ form a distinguished inhomogeneous
basis for the algebra of symmetric polynomials in $n$ variables over the
field $\mathbb{F}:=\mathbb{Q}(q,t)$. They were first introduced in \cite{Kn,
S-int}, building on joint work by one of the authors with F. Knop \cite{KS1}
and earlier work with B. Kostant \cite{KoS1, KoS2, S-inv}. These polynomials
are indexed by the set of partitions with at most $n$ parts 
\begin{equation*}
\mathcal{P}_{n}:=\left\{\, \lambda \in \mathbb{Z}^{n}\,\,\, | \,\,\, \lambda _{1}\geq \lambda
_{2}\geq \cdots \geq \lambda _{n}\geq 0 \,\right\} .
\end{equation*}
For a partition $\mu \in \mathcal{P}_{n}$ we define $\left\vert \mu
\right\vert =\mu _{1}+\cdots +\mu _{n}$ and write 
\begin{equation*}
\overline{\mu}=\left( q^{\mu _{1}}\tau _{1},\ldots ,q^{\mu _{n}}\tau
_{n}\right) \text{ where } \tau:=(\tau_1,\ldots,\tau_n) \text{ with }
\tau _{i}:=t^{1-i}.
\end{equation*}
Then $R_{\lambda}(x) =R_{\lambda}(x;q,t)$ is, up to normalization, characterized as the unique nonzero symmetric polynomial of degree at most $\left\vert \lambda \right\vert$ 
satisfying the vanishing conditions
\begin{equation*}
R_{\lambda}(\overline{\mu}) =0\text{ for } \mu\in\mathcal{P}_n \text{ such that } \left\vert \mu
\right\vert \leq \left\vert \lambda \right\vert ,\text{ }\mu \neq \lambda.
\end{equation*}
The normalization is fixed by requiring that the coefficient of $x^\lambda:=x_{1}^{\lambda_{1}}\cdots x_{n}^{\lambda _{n}}$ in the monomial expansion of $R_\lambda(x)$ is $1$.
In spite of their deceptively
simple definition, these polynomials possess some truly remarkable
properties. For instance, as shown in \cite{Kn, S-int}, the top homogeneous
part of $R_{\lambda}(x)$ is the Macdonald polynomial $P_{\lambda}(x)$ \cite{Mac}
and $R_\lambda(x)$ satisfies the extra vanishing property $R_\lambda(\overline{\mu})=0$ unless 
$\lambda\subseteq\mu$ as Ferrer diagrams. Other key properties of 
$R_{\lambda}(x) $, which were proven by A. Okounkov \cite{Ok},
include the binomial theorem, which gives an explicit expansion of 
$R_{\lambda}(ax) =R_{\lambda}(ax_{1},\ldots
,ax_{n};q,t) $ in terms of the $R_{\mu}(x;q^{-1},t^{-1})$'s
over the field $\mathbb{K}:=\mathbb{Q}(q,t,a),$ and the \emph{duality} or 
\emph{evaluation symmetry}, which involves the evaluation points 
\begin{equation*}
\widetilde{\mu}=\left( q^{-\mu _{n}}\tau _{1},\ldots ,q^{-\mu _{1}}\tau
_{n}\right),\qquad \mu\in\mathcal{P}_n
\end{equation*}
and takes the form
\begin{equation*}
\frac{R_{\lambda}(a\widetilde{\mu}) }{R_{\lambda }(a\tau)}=\frac{R_{\mu}(a\widetilde{\lambda})}{R_\mu(a\tau)}.
\end{equation*}

The interpolation polynomials have natural non-symmetric analogs $G_\alpha(x)=
G_{\alpha}(x;q,t)$, which were also defined in \cite{Kn, S-int}. These
are indexed by the set of compositions with at most $n$ parts, $\mathcal{C}_n:=
\bigl(\mathbb{Z}_{\geq 0}\bigr)^n$. For a composition 
$\beta\in\mathcal{C}_n$ we define
\begin{equation*}
\overline{\beta}:=w_{\beta}(\overline{\beta _{+}})
\end{equation*}
where $w_{\beta}$ is the shortest permutation such that $\beta
_{+}=w_{\beta }^{-1}(\beta) $ is a partition. Then $G_{\alpha}(x)$
is, up to normalization, characterized as the unique polynomial of degree at most $\left\vert
\alpha \right\vert :=\alpha _{1}+\cdots +\alpha _{n}$ satisfying the vanishing
conditions
\begin{equation*}
G_{\alpha}(\overline{\beta}) =0\text{ for } \beta\in\mathcal{C}_n \text{ such that } \left\vert \beta
\right\vert \leq \left\vert \alpha \right\vert ,\text{ }\beta \neq \alpha.
\end{equation*}
The normalization is fixed by requiring that the coefficient of $x^\alpha:=x_{1}^{\alpha
_{1}}\cdots x_{n}^{\alpha _{n}}$ in the monomial expansion of $G_\alpha(x)$ is $1$.

Many properties of the symmetric interpolation polynomials $R_{\lambda}(x)$
admit non-symmetric counterparts for the $G_{\alpha}(x)$. For instance, the top
homogeneous part of $G_{\alpha}(x) $ is the non-symmetric
Macdonald polynomial $E_{\alpha}(x)$ and $G_\alpha(x)$ satisfies an extra vanishing property \cite{Kn}. An analog of
the binomial theorem, proved by one of us in \cite[Thm. 1.1]{S-bin}, gives
an explicit expansion of $G_{\alpha}(ax;q,t) $ in terms of a
second family of interpolation polynomials $G_{\alpha}^{\prime}(x)=G_\alpha^\prime(x;q,t)$. 
These latter polynomials are characterized by having the same top
homogeneous part as $G_{\alpha}(x) $, namely the non-symmetric
polynomial $E_{\alpha}(x)$, and the following vanishing
conditions at the evaluation points $\widetilde{\beta}:=\overline{(-w_0\beta)}$, with $w_0$ the longest
element of the symmetric group $S_n$:
\begin{equation*}
G_{\alpha}^\prime(\widetilde{\beta}) =0\text{ for }\left\vert \beta
\right\vert <\left\vert \alpha \right\vert.
\end{equation*}

The first result of the present paper is a Demazure-type formula for the
primed interpolation polynomials $G_{\alpha }^{\prime}(x) $ in
terms of $G_{\alpha}(x)$, which involves the symmetric group action on the algebra of polynomials in $n$ variables over $\mathbb{F}$ by permuting the variables, as well as the
associated Hecke algebra action 
in terms of Demazure-Lusztig operators $H_w$ ($w\in S_n$) as described in the
next section.
\vspace{.3cm}

\noindent
{\bf Theorem A.}
{\it
Write
$\textup{I}(\alpha):=\#\left\{i<j\mid \alpha_i\geq\alpha_j\right\}$.
Then we have 
\begin{equation*}
G_{\alpha }^{\prime }(t^{n-1}x;q^{-1},t^{-1})=
t^{(n-1)|\alpha |-
\textup{I}(\alpha)}
w_{0}H_{w_{0}}G_{\alpha }(x;q,t).
\end{equation*}}

This is restated and proved in Theorem \ref{THMprime} below.

The second result is the following duality theorem for $G_{\alpha}(x)$, which
is the non-symmetric analog of Okounkov's duality result.
\vspace{.3cm}

\noindent
{\bf Theorem B.}
{\it 
For all compositions $\alpha,\beta\in\mathcal{C}_n$ we have 
\begin{equation*}
\frac{G_{\alpha}(a\widetilde{\beta})}{G_{\alpha}(a\tau)}=\frac{G_{\beta}(a\widetilde{\alpha})}
{G_{\beta}(a\tau)}.
\end{equation*}}

This is a special case of Theorem \ref{THMsymmetry} below.

We now recall the interpolation $O$-polynomials introduced in \cite[Thm. 1.1]
{S-bin}. Write $x^{-1}$ for $\left( x_{1}^{-1},\ldots
,x_{n}^{-1}\right) $. Then it was shown in \cite[Thm. 1.1]{S-bin} that there
exists a unique polynomial $O_{\alpha}(x)=O_\alpha(x;q,t;a)$ of degree at most
$\left\vert \alpha \right\vert $ with coefficients in the field $\mathbb{K}$ such that
\begin{equation*}
O_{\alpha}(\overline{\beta}^{-1})=\frac{G_{\beta}(a\widetilde{\alpha})}
{G_{\beta}(a\tau)}\text{ for all }\beta .
\end{equation*}
Our third result is a simple expression for the $O$-polynomials in terms of
the interpolation polynomials $G_{\alpha}(x)$.
\vspace{.3cm}

\noindent
{\bf Theorem C.}
{\it For all compositions $\alpha\in\mathcal{C}_n$ 
we have%
\begin{equation*}
O_{\alpha}\left( x\right) =\frac{G_{\alpha}(t^{1-n}aw_{0}x)}{%
G_{\alpha}(a\tau)}.
\end{equation*}}

This is deduced in Proposition \ref{propO} below as a direct consequence of
non-symmetric duality. We also obtain new proofs of Okounkov's \cite{Ok} duality
theorem, as well as the dual binomial theorem of A. Lascoux, E. Rains and O. Warnaar \cite{LRW},
which gives an expansion of the primed-interpolation polynomials $G_{\alpha
}^{\prime }(x) $ in terms of the $G_{\beta}(ax)$'s.\\

\noindent
{\it Acknowledgements:}
We thank Eric Rains for sharing with us his unpublished results with 
Alain Lascoux and Ole Warnaar on a one-parameter rational extension of the non-symmetric interpolation Macdonald polynomials. It leads to a different proof of the duality 
of the non-symmetric interpolation Macdonald polynomials (Theorem B). We thank an anonymous
referee for detailed comments.

The research of S. Sahi was partially supported by a Simons Foundation grant
(509766).

%%%%%%%%%%%%%%%%%%%%%%%%%%%%%%%%%%%%%%%%%
\section{Demazure-Lusztig operators and the primed interpolation polynomials}\label{s2}
%%%%%%%%%%%%%%%%%%%%%%%%%%%%%%%%%%%%%%%%%%
We use the notations from \cite{S-bin}. The correspondence with the notations from the other important references \cite{Kn}, \cite{S-int} and \cite{Ok} is listed in \cite[\S 2]{S-bin} (directly after Lemma 2.8).

Let $S_n$ be the symmetric group in $n$ letters and $s_i\in S_n$ the permutation that
swaps $i$ and $i+1$. The $s_i$ ($1\leq i<n$) are Coxeter generators for $S_n$. Let $\ell: S_n\rightarrow\mathbb{Z}_{\geq 0}$ be the associated length function. Let $S_n$ act
on $\mathbb{Z}^n$ and $\mathbb{K}^n$ by $s_iv:=(\cdots,v_{i-1},v_{i+1},v_i,v_{i+2},\ldots)$ for
$v=(v_1,\ldots,v_n)$. Write $w_0\in S_n$ for the longest element, given explicitly by $i\rightarrow n+1-i$ for $i=1,\ldots,n$.

For $v=(v_1,\ldots,v_n)\in\mathbb{Z}^n$ define $\overline{v}=(\overline{v}_1,\ldots,\overline{v}_n)\in\mathbb{F}^n$ by
$\overline{v}_i:=q^{v_i}t^{-k_i(v)}$ with 
\[
k_i(v):=\#\{k<i \,\, | \,\, v_k\geq v_i\}+\#\{k>i \,\, | \,\, v_k>v_i\}.
\]
If $v\in\mathbb{Z}^n$ has non-increasing entries $v_1\geq v_2\geq \cdots\geq v_n$, 
then $\overline{v}=(q^{v_1}\tau_1,
\ldots,q^{v_n}\tau_n)$.
For arbitrary $v\in\mathbb{Z}^n$ we have $\overline{v}=w_v(\overline{v_+})$ with $w_v\in S_n$ the shortest permutation such that
$v_+:=w_v^{-1}(v)$ has nonincreasing entries, see \cite[\S 2]{Kn}. We write $\widetilde{v}:=\overline{-w_0v}$ for $v\in\mathbb{Z}^n$. 

Note that $\overline{\alpha}_n=t^{1-n}$ if 
$\alpha\in\mathcal{C}_n$ with
$\alpha_n=0$.

For a field $F$ we write $F[x]:=F[x_1,\ldots,x_n]$, $F[x^{\pm 1}]:=
F[x_1^{\pm 1},\ldots,x_n^{\pm 1}]$ and $F(x)$ for the quotient field of $F[x]$.
The symmetric group acts by algebra automorphisms on $\mathbb{F}[x]$ and $\mathbb{F}(x)$, with the action of $s_i$ by 
interchanging $x_i$ and $x_{i+1}$ for $1\leq i<n$.
Consider the $\mathbb{F}$-linear operators
\[
H_i=ts_i-\frac{(1-t)x_i}{x_i-x_{i+1}}(1-s_i)=t+\frac{x_i-tx_{i+1}}{x_i-x_{i+1}}(s_i-1)
\]
on $\mathbb{F}(x)$
($1\leq i<n$) called Demazure-Lusztig operators, and the automorphism $\Delta$ of $\mathbb{F}(x)$ defined by
\[
\Delta f(x_1,\dots,x_n)=f(q^{-1}x_n,x_1,\ldots,x_{n-1}).
\]
Note that $H_i$ ($1\leq i<n$) and $\Delta$ preserve $\mathbb{F}[x^{\pm 1}]$ and $\mathbb{F}[x]$.
Cherednik \cite{ChPap,Ch} showed that the operators $H_i$ ($1\leq i<n$) and $\Delta$ satisfy the defining relations of the type A extended affine Hecke algebra,
\begin{equation*}
\begin{split}
(H_i-t)(H_i+1)&=0,\\
H_iH_j&=H_jH_i,\qquad |i-j|>1,\\
H_iH_{i+1}H_i&=H_{i+1}H_iH_{i+1},\\
\Delta H_{i+1}&=H_i\Delta,\\
\Delta^2 H_{1}&=H_{n-1}\Delta^2
\end{split}
\end{equation*}
for all the indices such that both sides of the equation make sense
(see also \cite[\S 3]{Kn}). For $w\in S_n$ we write $H_w:=H_{i_1}H_{i_2}\cdots H_{i_\ell}$
with $w=s_{i_1}s_{i_2}\cdots s_{i_\ell}$ a reduced expression for $w\in S_n$. It is well defined
because of the braid relations for the $H_i$'s. Write $\overline{H}_i:=H_i+1-t=tH_i^{-1}$ and set
\begin{equation}\label{Cherednik}
\xi_i:=t^{1-n}\overline{H}_{i-1}\cdots\overline{H}_1\Delta^{-1}H_{n-1}\cdots H_i,\qquad
1\leq i\leq n.
\end{equation}
The operators $\xi_i$'s are pairwise commuting invertible operators, with inverses
\[
\xi_i^{-1}=\overline{H}_i\cdots \overline{H}_{n-1}\Delta H_1\cdots H_{i-1}.
\]
The $\xi_i^{-1}$ ($1\leq i\leq n$) are the Cherednik operators \cite{Ch,Kn}.

The monic non-symmetric Macdonald polynomial $E_\alpha\in\mathbb{F}[x]$ of degree $\alpha\in\mathcal{C}_n$ is the unique polynomial satisfying
\[
\xi_i^{-1}E_\alpha=\overline{\alpha}_iE_\alpha,\qquad i=1,\ldots,n
\]
and normalized such that the coefficient of $x^\alpha$ in $E_\alpha$ is $1$.

Let $\iota$ be the field automorphism of $\mathbb{K}$ inverting $q$, $t$ and $a$. It restricts to
a field automorphism of  $\mathbb{F}$, inverting $q$ and $t$. We extend $\iota$ to a $\mathbb{Q}$-algebra automorphism of $\mathbb{K}[x]$ and $\mathbb{F}[x]$ by letting $\iota$ act on the coefficients of the polynomial. Write 
\[
G_\alpha^\circ:=\iota\bigl(G_\alpha\bigr), \qquad E_\alpha^\circ:=\iota\bigl(E_\alpha\bigr)
\]
for $\alpha\in\mathcal{C}_n$. 
Note that $\overline{v}^{-1}=(\iota(\overline{v}_1),\ldots,\iota(\overline{v}_n))$. 

Put $H_i^\circ$, $H_w^\circ$, $\overline{H}_i^\circ$, $\Delta^\circ$ and $\xi_i^\circ$ for the operators 
$H_i$, $H_w$, $\overline{H}_i$, $\Delta$ and $\xi_i$ with $q,t$ replaced by their inverses. For instance,
\begin{equation*}
\begin{split}
&H_i^\circ=t^{-1}s_i-\frac{(1-t^{-1})x_i}{x_i-x_{i+1}}(1-s_i),\\
&\Delta^\circ f(x_1,\ldots,x_n)=f(qx_n,x_1,\ldots,x_{n-1}).
\end{split}
\end{equation*}
We then have
$\xi_i^\circ E_\alpha^\circ=\overline{\alpha}_iE_\alpha^\circ$ for $i=1,\ldots,n$, which characterizes
$E_\alpha^\circ$ up to a scalar factor.

%%%%%%%%%%%%%%%%%%%%%%%%%%%%%%%%%
\begin{thm}\label{THMprime}
For $\alpha\in\mathcal{C}_n$ we have
\begin{equation}\label{identity}
G_\alpha^\prime(x)=t^{(1-n)|\alpha|+\textup{I}(\alpha)}w_0H_{w_0}^\circ G_\alpha^\circ(t^{n-1}x)
\end{equation}
with $\textup{I}(\alpha):=\#\{i<j\, | \, \alpha_i\geq \alpha_j\}$.
\end{thm}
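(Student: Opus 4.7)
The polynomial $G_{\alpha}^{\prime}$ is characterized (\cite[Thm.~1.1]{S-bin}) as the unique polynomial of degree at most $|\alpha|$ whose top-homogeneous component equals $E_{\alpha}$ and which satisfies $G_{\alpha}^{\prime}(\widetilde{\beta})=0$ for all $\beta\in\mathcal{C}_n$ with $|\beta|<|\alpha|$. Writing $F_{\alpha}(x)$ for the right-hand side of \eqref{identity}, my plan is to verify these three properties for $F_{\alpha}$. The degree bound is immediate, since $G_{\alpha}^{\circ}$ has degree at most $|\alpha|$, the rescaling $x\mapsto t^{n-1}x$ preserves degree, and both $w_0$ and $H_{w_{0}}^{\circ}$ send polynomials of a given degree to polynomials of the same degree.

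The top-homogeneous part of $G_{\alpha}^{\circ}(t^{n-1}x)$ equals $t^{(n-1)|\alpha|}E_{\alpha}^{\circ}(x)$, so the second claim reduces to the identity
\begin{equation*}
E_{\alpha}(x)=t^{\textup{I}(\alpha)}\,w_{0}H_{w_{0}}^{\circ}E_{\alpha}^{\circ}(x).
\end{equation*}
I would prove this via the Cherednik-operator characterization: both $E_{\alpha}$ and $E_{\alpha}^{\circ}$ are joint eigenfunctions with the same eigenvalues $\overline{\alpha}_i$, of $\xi_i^{-1}$ and $\xi_i^{\circ}$ respectively. From \eqref{Cherednik} together with the affine Hecke algebra relations one checks that $w_0 H_{w_{0}}^{\circ}$ intertwines $\xi_i^{\circ}$ with $\xi_i^{-1}$, whence $w_{0}H_{w_{0}}^{\circ}E_{\alpha}^{\circ}\propto E_{\alpha}$. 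The scaling factor is then pinned down by tracking the coefficient of $x^{\alpha}$ through the action of each $H_i^{\circ}$ on monomials, producing the combinatorial factor $t^{-\textup{I}(\alpha)}$.

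The vanishing condition is the technical heart. The first step is the elementary computation
\begin{equation*}
w_{0}\widetilde{\beta}=t^{1-n}\overline{\beta}^{-1},
\end{equation*}
which follows from the identity $k_{n+1-i}(-w_{0}\beta)=(n-1)-k_{i}(\beta)$ by direct bookkeeping. Using $(w_{0}p)(y)=p(w_{0}y)$ this yields
\begin{equation*}
F_{\alpha}(\widetilde{\beta})=t^{(1-n)|\alpha|+\textup{I}(\alpha)}\bigl(H_{w_{0}}^{\circ}g\bigr)\bigl(t^{1-n}\overline{\beta}^{-1}\bigr),\qquad g(x):=G_{\alpha}^{\circ}(t^{n-1}x),
\end{equation*}
and one notes that $g(t^{1-n}\overline{\gamma}^{-1})=\iota\bigl(G_{\alpha}(\overline{\gamma})\bigr)=0$ for every $\gamma\neq\alpha$ with $|\gamma|\leq|\alpha|$, since the defining vanishing of $G_\alpha$ is preserved by the field automorphism $\iota$.

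The main obstacle will be extracting vanishing of the entire expression from these isolated vanishings of $g$. I would expand $H_{w_{0}}^{\circ}$ along a reduced decomposition of $w_{0}$ and apply the factors successively, using
\begin{equation*}
H_i^{\circ}p=\frac{x_i-t^{-1}x_{i+1}}{x_i-x_{i+1}}(s_i p)-\frac{(1-t^{-1})x_i}{x_i-x_{i+1}}p.
\end{equation*}
The evaluation points encountered are permutations of $t^{1-n}\overline{\beta}^{-1}$, and the delicate step is to show, via the extra-vanishing property of $G_{\alpha}$ \cite{Kn} together with the structure of the Hecke coefficients, that every surviving term is a value of $g$ at one of its zeros. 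This inductive bookkeeping is where the most careful analysis lies; if it proves too unwieldy, an alternative is to characterize both $F_{\alpha}$ and $G_{\alpha}^{\prime}$ by a Cherednik-type eigenvalue system and match eigenvalues directly, thereby circumventing the explicit sum-over-permutations calculation.
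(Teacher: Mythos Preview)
Your approach is essentially the paper's: verify degree, the top-homogeneous component via the intertwining $\xi_i^{-1}\Psi=\Psi\xi_i^{\circ}$ for $\Psi=w_0H_{w_0}^{\circ}$, and vanishing via the identity $t^{n-1}w_0\widetilde{\beta}=\overline{\beta}^{-1}$. The vanishing step is simpler than you fear: the paper invokes \cite[Lem.~2.1(2)]{S-bin}, which says directly that $(H_{w_0}^{\circ}G_{\alpha}^{\circ})(\overline{\beta}^{-1})$ is a linear combination of the values $G_{\alpha}^{\circ}(\overline{\sigma\beta}^{-1})$ for $\sigma\in S_n$ (the potential mismatch between $\sigma\overline{\beta}$ and $\overline{\sigma\beta}$ when $\beta$ has repeated parts is harmless because the corresponding Hecke coefficient then vanishes at $\overline{\beta}^{-1}$), and each such value is zero by the \emph{defining} vanishing of $G_{\alpha}$ since $|\sigma\beta|=|\beta|<|\alpha|$---no extra-vanishing property and no fallback eigenvalue argument is needed.
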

%%%%%%%%%%%%%%%%%%%%%%%%%%%%%%%%%%
\noindent
%%%%%%%%%%%%%%%%%%%%%%%%%%%%%%%%%%
{\bf Remark.}
Formally set $t=q^r$, replace $x$ by $1+(q-1)x$, divide both sides of \eqref{identity} by $(q-1)^{|\alpha|}$ and take the limit $q\rightarrow 1$. Then
\begin{equation}\label{Jackversion}
G_\alpha^\prime(x;r)=(-1)^{|\alpha|}\sigma(w_0)w_0G_\alpha(-x-(n-1)r;r)
\end{equation}
for the non-symmetric interpolation Jack polynomial $G_\alpha(\cdot\,;r)$ and its primed version (see \cite{S-bin}). Here $\sigma$ denotes the action of the symmetric group with $\sigma(s_i)$ the rational degeneration of the Demazure-Lusztig operators $H_i$, given explicitly by
\[
\sigma(s_i)=s_i+\frac{r}{x_i-x_{i+1}}(1-s_i),
\]
see \cite[\S 1]{S-bin}. To establish the formal limit \eqref{Jackversion} one uses that $\sigma(w_0)w_0=w_0\sigma^\circ(w_0)$ with $\sigma^\circ$ the action of the symmetric group defined in terms of the rational degeneration
\[
\sigma^\circ(s_i)=s_i-\frac{r}{x_i-x_{i+1}}(1-s_i)
\]
of $H_i^\circ$. Formula \eqref{Jackversion} was obtained before in \cite[Thm. 1.10]{S-bin}.
%%%%%%%%%%%%%%%%%%%%%%%%%%%%%%%%%%
\begin{proof}
We show that the right hand side of \eqref{identity} satisfies the defining properties of $G_\alpha^\prime$. For the vanishing property, note that
\begin{equation}\label{tildeversusnormal}
t^{n-1}w_0\widetilde{\beta}=\overline{\beta}^{-1}
\end{equation}
 (this is the $q$-analog of \cite[Lem. 6.1(2)]{S-bin}), hence
\[\bigl(w_0H_{w_0}^\circ G_\alpha^\circ(t^{n-1}x)\bigr)|_{x=\widetilde{\beta}}=
\bigl(H_{w_0}^\circ G_\alpha^\circ(x)\bigr)|_{x=\overline{\beta}^{-1}}.
\]
This expression is zero for $|\beta|<|\alpha|$ since it is a linear combination of the evaluated interpolation polynomials
$G_\alpha^\circ(\overline{w\beta}^{-1})$ ($w\in S_n$) 
by \cite[Lem. 2.1(2)]{S-bin}. 

It remains to show that the top homogeneous
terms of both sides of \eqref{identity} are the same, i.e. that
\begin{equation}\label{step}
E_\alpha=t^{\textup{I}(\alpha)}w_0H_{w_0}^\circ E_\alpha^\circ.
\end{equation}

Note that $\Psi:=w_0H_{w_0}^\circ$ satisfies the intertwining properties
\begin{equation}\label{int}
\begin{split}
H_i\Psi&=t\Psi \overline{H}_i^\circ,\\
\Delta\Psi&=t^{n-1}\Psi \overline{H}_{n-1}^\circ\cdots\overline{H}_1^\circ(\Delta^\circ)^{-1}H_{n-1}^\circ\cdots H_1^\circ
\end{split}
\end{equation}
for $1\leq i<n$ (use e.g. \cite[Prop. 3.2.2]{Ch}). It follows that $\xi_i^{-1}\Psi=\Psi\xi_i^\circ$ for $i=1,\ldots,n$.
Therefore
\[
E_\alpha(x)=c_\alpha\Psi E_\alpha^\circ(x)
\]
for some constant $c_\alpha\in\mathbb{F}$. But the coefficient of $x^\alpha$ in
$\Psi x^\alpha$ is $t^{-\textup{I}(\alpha)}$,
hence $c_\alpha=t^{\textup{I}(\alpha)}$.
\end{proof}
%%%%%%%%%%%%%%%%%%%%%%%%%%%%%%%%%
\noindent
%%%%%%%%%%%%%%%%%%%%%%%%%%%%%%%%%

Consider the Demazure operators $H_i$ and the Cherednik operators $\xi_j^{-1}$ as operators on 
the space $\mathbb{F}[x^{\pm 1}]$ of Laurent polynomials.
For an integral vector $u\in\mathbb{Z}^n$, let $E_u\in\mathbb{F}[x^{\pm 1}]$ be the common eigenfunction of the Cherednik operators $\xi_j^{-1}$ with eigenvalues $\overline{u}_j$ ($1\leq j\leq n$), normalized such that the coefficient of $x^u:=x_1^{u_1}\cdots x_n^{u_n}$ in $E_u$ is $1$. For 
$u=\alpha\in\mathcal{C}_n$ this definition reproduces the non-symmetric Macdonald polynomial $E_\alpha\in\mathbb{F}[x]$ as defined before. Note that 
\[
E_{u+(1^n)}=x_1\cdots x_nE_u(x).
\]
It is now easy to check that formula \eqref{step} is valid with $\alpha$ replaced by an arbitrary integral vector $u$,
\begin{equation}\label{stepprime}
E_u=t^{\textup{I}(u)}w_0H_{w_0}^\circ E_u^\circ
\end{equation}
with $E_u^\circ:=\iota(E_u)$.
Furthermore, one can show in the same vein as the proof of \eqref{step} that
\[
w_0E_{-w_0u}(x^{-1})=E_u(x)
\]
for an integral vector $u$, where $p(x^{-1})$ stands for inverting all the parameters $x_1,\ldots,x_n$
in the Laurent polynomial $p(x)\in\mathbb{F}[x^{\pm 1}]$.
Combining this equality with  \eqref{stepprime} yields
\[
E_{-w_0u}(x^{-1})=t^{\textup{I}(u)}H_{w_0}^\circ E_u^\circ(x),
\]
which is a  special case of a known identity for non-symmetric Macdonald polynomials
(see \cite[Prop. 3.3.3]{Ch}).

%%%%%%%%%%%%%%%%%%%%%%%%%%%%%%%%%%%%%%%%%%%%%%%%
\section{Evaluation formulas}
%%%%%%%%%%%%%%%%%%%%%%%%%%%%%%%%%%%%%%%%%%%%%%%%
In \cite[Thm. 1.1]{S-bin} the following combinatorial evaluation formula 
\begin{equation}\label{PrincipalEvaluation}
G_\alpha(a\tau)=\prod_{s\in\alpha}\Bigl(\frac{t^{1-n}-q^{a^\prime(s)+1}t^{1-l^\prime(s)}}
{1-q^{a(s)+1}t^{l(s)+1}}\Bigr)\prod_{s\in\alpha}(at^{l^\prime(s)}-q^{a^\prime(s)})
\end{equation}
was obtained, with $a(s)$, $l(s)$, $a^\prime(s)$ and $l^\prime(s)$ the arm, leg, coarm and coleg of 
$s=(i,j)\in\alpha$, defined by
\begin{equation*}
\begin{split}
a(s)&:=\alpha_i-j,\qquad l(s):=\#\{k>i \,\, | \,\, j\leq\alpha_k\leq\alpha_i\}+
\#\{k<i\,\, | \,\, j\leq\alpha_k+1\leq\alpha_i\},\\
a^\prime(s)&:=j-1,\qquad l^\prime(s):=\#\{k>i\,\, | \,\, \alpha_k>\alpha_i\}+\#\{k<i\,\, | \,\, \alpha_k
\geq\alpha_i\}.
\end{split}
\end{equation*}
By \eqref{PrincipalEvaluation} we have
\[
E_\alpha(\tau)=\lim_{a\rightarrow\infty}a^{-|\alpha|}G_\alpha(a\tau)=\prod_{s\in \alpha}
\Bigl(\frac{t^{1-n+l^\prime(s)}-q^{a^\prime(s)+1}t}{1-q^{a(s)+1}t^{l(s)+1}}\Bigr),
\] 
which is the well known evaluation formula \cite{ChPap,Ch} for the non-symmetric Macdonald polynomials. Note that for $\alpha\in\mathcal{C}_n$, 
\[
\ell(w_0)-I(\alpha)=\#\{i<j \, | \, \alpha_i<\alpha_j\}.
\]
%%%%%%%%%%%%%%%%%%%%%%%%%%%%%%%%%%%%%%%%%%%%%%%%%
\begin{lem}\label{CORevalprime}
For $\alpha\in\mathcal{C}_n$ we have
\[
G_\alpha^\prime(a\tau)=t^{(1-n)|\alpha|+\textup{I}(\alpha)-\ell(w_0)}G_\alpha^\circ(a\tau^{-1}).
\]
\end{lem}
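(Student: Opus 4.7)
The strategy is to specialize Theorem \ref{THMprime} at $x=a\tau$ and to observe that the evaluation point arising on the right-hand side is a common eigenvector of the Demazure--Lusztig operators $H_i^\circ$, with uniform eigenvalue $t^{-1}$.

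First I would invoke scale-invariance. The operators $H_i^\circ$ and $w_0$ involve only permutations and ratios of the $x_j$'s, hence they commute with the homothety $x\mapsto t^{n-1}x$. Substituting $x=a\tau$ in \eqref{identity} and using this commutation gives
\[
G_\alpha^\prime(a\tau)=t^{(1-n)|\alpha|+\textup{I}(\alpha)}\bigl(w_0H_{w_0}^\circ G_\alpha^\circ\bigr)(at^{n-1}\tau).
\]
Because $\tau=(1,t^{-1},\ldots,t^{1-n})$, one has $t^{n-1}\tau=(t^{n-1},t^{n-2},\ldots,1)=w_0\tau^{-1}$, so the argument equals $aw_0\tau^{-1}$. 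Since $(w_0f)(w_0y)=f(y)$ for every polynomial $f$, the outer $w_0$ can be stripped off:
\[
G_\alpha^\prime(a\tau)=t^{(1-n)|\alpha|+\textup{I}(\alpha)}\bigl(H_{w_0}^\circ G_\alpha^\circ\bigr)(a\tau^{-1}).
\]

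The heart of the proof is then the point-evaluation identity
\[
\bigl(H_{w_0}^\circ f\bigr)(a\tau^{-1})=t^{-\ell(w_0)}f(a\tau^{-1})\qquad (f\in\mathbb{F}[x]).
\]
The key observation is that the coordinates of $y:=a\tau^{-1}=(a,at,\ldots,at^{n-1})$ satisfy $y_i=t^{-1}y_{i+1}$ for every $i$. Using the form $H_i^\circ=t^{-1}+\frac{x_i-t^{-1}x_{i+1}}{x_i-x_{i+1}}(s_i-1)$, the coefficient of $(s_i-1)$ vanishes identically on the hyperplane $x_i=t^{-1}x_{i+1}$, so $(H_i^\circ g)(y)=t^{-1}g(y)$ for every polynomial $g$ and every $i$. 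Fixing any reduced expression $w_0=s_{i_1}\cdots s_{i_\ell}$, one peels off the operators one at a time: each peel produces a factor $t^{-1}$ and leaves an inner polynomial to be evaluated at the very same point $y$, so induction on $\ell$ yields the total factor $t^{-\ell(w_0)}$.

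Combining the two displays gives the claimed formula. The step I expect to be the main obstacle is the point-evaluation identity: one has to notice that the ``opposite'' dilation $t^{n-1}$ appearing in Theorem A conspires with the $w_0$-twist to produce precisely the point $a\tau^{-1}$ on which $H_{w_0}^\circ$ degenerates to a scalar. Once this is recognized, the rest is essentially bookkeeping with the definition of $H_i^\circ$.
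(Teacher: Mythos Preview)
Your proof is correct and follows essentially the same route as the paper: specialize Theorem~\ref{THMprime} at $x=a\tau$, use $t^{n-1}w_0\tau=\tau^{-1}=\overline{0}^{-1}$ to reduce to $(H_{w_0}^\circ G_\alpha^\circ)(a\tau^{-1})$, and then observe that $H_{w_0}^\circ$ acts as the scalar $t^{-\ell(w_0)}$ at this point. The only difference is that the paper obtains the last step by citing \cite[Lem.~2.1(2)]{S-bin}, whereas you prove the point-evaluation identity $(H_i^\circ g)(a\tau^{-1})=t^{-1}g(a\tau^{-1})$ directly from the form $H_i^\circ=t^{-1}+\frac{x_i-t^{-1}x_{i+1}}{x_i-x_{i+1}}(s_i-1)$ and the relation $y_i=t^{-1}y_{i+1}$; this makes your argument self-contained but is otherwise the same idea.
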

%%%%%%%%%%%%%%%%%%%%%%%%%%%%%%%%%%%%%%%%%%%%%%%
\begin{proof}
Since $t^{n-1}w_0\tau=\tau^{-1}=\overline{0}^{-1}$ we have by Theorem \ref{THMprime},
\begin{equation*}
\begin{split}
G_\alpha^\prime(a\tau)&=t^{(1-n)|\alpha|+\textup{I}(\alpha)}\bigl(H_{w_0}^{\circ}G_\alpha^\circ\bigr)(a\overline{0}^{-1})\\
&=t^{(1-n)|\alpha|+\textup{I}(\alpha)-\ell(w_0)}G_\alpha^\circ(a\overline{0}^{-1}),
\end{split}
\end{equation*}
where we have used \cite[Lem. 2.1(2)]{S-bin} for the second equality.
\end{proof}
%%%%%%%%%%%%%%%%%%%%%%%%%%%%%%%%%%%%%%%%%%%%%%%

We now derive a relation between the evaluation formulas for $G_\alpha(x)$ and $G_\alpha^\circ(x)$. To formulate this we write, following
\cite{LRW},
\[
n(\alpha):=\sum_{s\in\alpha}l(s),\qquad n^\prime(\alpha):=\sum_{s\in\alpha}a(s).
\]
Note that $n^\prime(\alpha)=\sum_{i=1}^n{{\alpha_i}\choose{2}}$,
hence it only depends on the $S_n$-orbit of $\alpha$, while
\begin{equation}\label{orbitrel}
n(\alpha)=n(\alpha^+)+\ell(w_0)-I(\alpha).
\end{equation}
The following lemma is a non-symmetric version of the first displayed formula on
\cite[Page 537]{Ok}.
%%%%%%%%%%%%%%%%%%%%%%%%%%%%%%%%%%%%%%%%%%%%%
\begin{lem}\label{LEMevalinverse}
For $\alpha\in\mathcal{C}_n$ we have
\[
G_\alpha(a\tau)=(-a)^{|\alpha|}t^{(1-n)|\alpha|-n(\alpha)}q^{n^\prime(\alpha)}G_\alpha^\circ(a^{-1}\tau^{-1}).
\]
\end{lem}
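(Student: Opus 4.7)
The plan is to derive this identity by direct manipulation of the explicit evaluation formula \eqref{PrincipalEvaluation}. Since $G_\alpha^\circ=\iota(G_\alpha)$ and $\iota$ inverts $q,t,a$, the value $G_\alpha^\circ(a^{-1}\tau^{-1})$ is obtained from the right-hand side of \eqref{PrincipalEvaluation} by substituting $q\mapsto q^{-1}$, $t\mapsto t^{-1}$, $a\mapsto a^{-1}$ (note that $\tau^{-1}$ with the original $t$ coincides with $\tau$ read in $t^{-1}$). So one can simply compare the two product formulas box by box.

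The first step I would carry out is the comparison of the linear factors. Using
\[
at^{l^\prime(s)}-q^{a^\prime(s)}=-a\,t^{l^\prime(s)}q^{a^\prime(s)}\bigl(a^{-1}t^{-l^\prime(s)}-q^{-a^\prime(s)}\bigr),
\]
each box in the second product of \eqref{PrincipalEvaluation} contributes a multiplicative factor $-a\,t^{l^\prime(s)}q^{a^\prime(s)}$ in passing from $G_\alpha^\circ(a^{-1}\tau^{-1})$ to $G_\alpha(a\tau)$. Next I would do the same for the rational factors: after extracting $t^{1-n}$ from the numerator and $-q^{a(s)+1}t^{l(s)+1}$ from the denominator of the LHS versions, the ratio of the LHS factor to the $\iota$-image factor comes out to $t^{1-n-l(s)-l^\prime(s)}q^{a^\prime(s)-a(s)}$ for each $s\in\alpha$.

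Multiplying the two contributions over all $s\in\alpha$, the $\sum_s l^\prime(s)$ terms cancel between the rational and linear parts, leaving the overall multiplier
\[
(-a)^{|\alpha|}\,t^{(1-n)|\alpha|-\sum_s l(s)}\,q^{2\sum_s a^\prime(s)-\sum_s a(s)}.
\]
The final step is to invoke the standard combinatorial identities
\[
\sum_{s\in\alpha}l(s)=n(\alpha),\qquad \sum_{s\in\alpha}a(s)=\sum_{s\in\alpha}a^\prime(s)=\sum_{i}\binom{\alpha_i}{2}=n^\prime(\alpha),
\]
the second of which follows because $a(s)+a^\prime(s)=\alpha_i-1$ summed across a row is symmetric in $j\leftrightarrow \alpha_i+1-j$. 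This converts $2\sum a^\prime(s)-\sum a(s)$ into $n^\prime(\alpha)$ and gives the claimed formula.

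The computation involves no conceptual difficulty beyond what is already contained in \eqref{PrincipalEvaluation}; the main obstacle is simply the careful bookkeeping of exponents and signs. In particular one must keep track of which of $l$, $l^\prime$, $a$, $a^\prime$ change under $\iota$ (they do not — they are combinatorial statistics of $\alpha$) and verify the cancellation of $\sum_s l^\prime(s)$ between the two products, which is what makes the final exponent depend only on $n(\alpha)$ and $n^\prime(\alpha)$.
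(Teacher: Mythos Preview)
Your proposal is correct and follows exactly the approach indicated in the paper, which simply states that the identity follows from the explicit evaluation formula \eqref{PrincipalEvaluation}. You have merely spelled out the box-by-box comparison that the paper leaves to the reader, and your bookkeeping of the exponents (in particular the cancellation of $\sum_s l'(s)$ and the identity $\sum_s a(s)=\sum_s a'(s)=n'(\alpha)$) is accurate.
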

%%%%%%%%%%%%%%%%%%%%%%%%%%%%%%%%%%%%%%%%%%%%%
\begin{proof}
This follows from
the explicit evaluation formula \eqref{PrincipalEvaluation} for the non-symmetric interpolation Macdonald polynomial $G_\alpha$.
\end{proof}
%%%%%%%%%%%%%%%%%%%%%%%%%%%%%%%%%%%%%%

Following \cite[(3.9)]{LRW} we define $\tau_\alpha\in\mathbb{F}$ ($\alpha\in\mathcal{C}_n$) by
\begin{equation}\label{taualpha}
\tau_\alpha:=(-1)^{|\alpha|}q^{n^\prime(\alpha)}t^{-n(\alpha^+)}.
\end{equation}
It only depends on the $S_n$-orbit of $\alpha$. 
%%%%%%%%%%%%%%
\begin{cor}\label{CORrelev}
For $\alpha\in\mathcal{C}_n$ we have
\[
G_\alpha^\prime(a^{-1}\tau)=\tau_\alpha^{-1}a^{-|\alpha|}G_\alpha(a\tau).
\]
\end{cor}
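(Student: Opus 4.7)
The plan is to combine the two preceding lemmas and simplify the resulting exponents using the orbit relation \eqref{orbitrel} and the definition \eqref{taualpha} of $\tau_\alpha$. Since both Lemma \ref{CORevalprime} and Lemma \ref{LEMevalinverse} are already proved, no further structural input is needed; the whole argument reduces to bookkeeping of powers of $q$ and $t$.

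First, I would apply Lemma \ref{CORevalprime} with $a$ replaced by $a^{-1}$ to obtain
\begin{equation*}
G_\alpha^\prime(a^{-1}\tau)=t^{(1-n)|\alpha|+\textup{I}(\alpha)-\ell(w_0)}\,G_\alpha^\circ(a^{-1}\tau^{-1}).
\end{equation*}
Then I would solve Lemma \ref{LEMevalinverse} for $G_\alpha^\circ(a^{-1}\tau^{-1})$, giving
\begin{equation*}
G_\alpha^\circ(a^{-1}\tau^{-1})=(-a)^{-|\alpha|}\,t^{-(1-n)|\alpha|+n(\alpha)}\,q^{-n^\prime(\alpha)}\,G_\alpha(a\tau),
\end{equation*}
and substitute this into the previous display. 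The $(1-n)|\alpha|$-terms in the exponent of $t$ cancel, leaving
\begin{equation*}
G_\alpha^\prime(a^{-1}\tau)=(-1)^{|\alpha|}a^{-|\alpha|}\,t^{\textup{I}(\alpha)-\ell(w_0)+n(\alpha)}\,q^{-n^\prime(\alpha)}\,G_\alpha(a\tau).
\end{equation*}

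The last step is to identify the scalar prefactor with $\tau_\alpha^{-1}a^{-|\alpha|}$. Using \eqref{orbitrel} in the form $n(\alpha)-\ell(w_0)+\textup{I}(\alpha)=n(\alpha^+)$ simplifies the exponent of $t$ to $n(\alpha^+)$, and from \eqref{taualpha} one reads off
\begin{equation*}
\tau_\alpha^{-1}=(-1)^{|\alpha|}q^{-n^\prime(\alpha)}t^{n(\alpha^+)},
\end{equation*}
which matches the prefactor exactly. There is no real obstacle here; the only point requiring care is making sure that the sign $(-a)^{-|\alpha|}=(-1)^{|\alpha|}a^{-|\alpha|}$ is handled correctly and that the orbit identity \eqref{orbitrel} is invoked with the correct sign convention, so that the $\ell(w_0)$ and $\textup{I}(\alpha)$ contributions from Lemma \ref{CORevalprime} and from $n(\alpha)$ in Lemma \ref{LEMevalinverse} combine into the single orbit-invariant quantity $n(\alpha^+)$.
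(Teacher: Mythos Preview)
Your proof is correct and is exactly the argument the paper intends: the paper's own proof is the single line ``Use Lemma \ref{CORevalprime}, Lemma \ref{LEMevalinverse} and \eqref{orbitrel},'' and you have simply written out the bookkeeping that this sentence leaves implicit.
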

%%%%%%%%%%%%%%
\begin{proof}
Use Lemma \ref{CORevalprime}, Lemma \ref{LEMevalinverse} and \eqref{orbitrel}.
\end{proof}
%%%%%%%%%%%%%%

%%%%%%%%%%%%%%%%%%%%%%%%%%%%%%%%%%%%%%%%%%%%%%%%%%%%%%%%%%
\section{Normalized interpolation Macdonald polynomials}\label{SECTIONnormalised}
%%%%%%%%%%%%%%%%%%%%%%%%%%%%%%%%%%%%%%%%%%%%%%%%%%%%%%%%%%

We need the basic representation of the (double) affine Hecke algebra on the space of $\mathbb{K}$-valued functions on $\mathbb{Z}^n$, which is constructed as follows. 

For $v\in\mathbb{Z}^n$ and $y\in\mathbb{K}^n$ write 
 $v^\natural:=(v_2,\ldots,v_n,v_1+1)$
 and $y^\natural:=(y_2,\ldots,y_n,qy_1)$. 
Denote the inverse of ${}^\natural$ by ${}^\sharp$, so $v^\sharp=(v_n-1,v_1,\ldots,v_{n-1})$ and $y^\sharp=(y_n/q,y_1,\ldots,y_{n-1})$.
We have the following lemma (cf. \cite{Kn,S-bin,S-int}). 
%%%%%%%%%%%%%%%%%%%%%%%%%%%%%%%%%
\begin{lem}\label{LEMtech}
Let $v\in\mathbb{Z}^n$ and $1\leq i<n$. Then we have
\begin{enumerate}
\item[{\bf 1.}] $s_i(\overline{v})=\overline{s_iv}$ if $v_i\not=v_{i+1}$.
\item[{\bf 2.}] $\overline{v}_i=t\overline{v}_{i+1}$ if $v_i=v_{i+1}$.
\item[{\bf 3.}] $\overline{v}^\natural=\overline{v^\natural}$.
\end{enumerate}
\end{lem}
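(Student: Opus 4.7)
All three assertions are elementary bookkeeping claims about the statistic
\[
k_i(v)=\#\{k<i \,|\, v_k\geq v_i\}+\#\{k>i \,|\, v_k>v_i\},
\]
which controls the exponent of $t$ in $\overline{v}_i=q^{v_i}t^{-k_i(v)}$. The plan is to verify each item by direct case analysis, writing everything in terms of the explicit formula for $\overline{v}_i$.

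For part \textbf{1}, assume $v_i\neq v_{i+1}$. I would first note that the formula for $k_j$ only cares about comparisons of $v_j$ with other entries, so for $j\notin\{i,i+1\}$ swapping the entries at positions $i$ and $i+1$ leaves $k_j(v)$ unchanged, giving $(\overline{v})_j=(\overline{s_iv})_j$. The content of the statement is therefore $(\overline{v})_{i+1}=(\overline{s_iv})_i$ and $(\overline{v})_i=(\overline{s_iv})_{i+1}$, which amounts to $k_{i+1}(v)=k_i(s_iv)$ and $k_i(v)=k_{i+1}(s_iv)$. To check the first, I split the sum defining $k_i(s_iv)$ into a $k<i$ part (unchanged), a $k>i+1$ part (unchanged), plus the new $k=i+1$ term $[(s_iv)_{i+1}>(s_iv)_i]=[v_i>v_{i+1}]$; this matches the corresponding decomposition of $k_{i+1}(v)$, whose analogous middle term is $[v_i\geq v_{i+1}]$. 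Under $v_i\neq v_{i+1}$ the two indicator functions agree, and the equality follows. The second identity is symmetric.

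For part \textbf{2}, assume $v_i=v_{i+1}$. Then $\overline{v}_i/\overline{v}_{i+1}=t^{k_{i+1}(v)-k_i(v)}$, and I would directly compute this difference. The contributions from indices $k<i$ and $k>i+1$ cancel, while the boundary terms give
\[
k_{i+1}(v)-k_i(v)=[v_i\geq v_{i+1}]-[v_{i+1}>v_i]=1-0=1,
\]
yielding $\overline{v}_i=t\,\overline{v}_{i+1}$.

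For part \textbf{3}, I would compare $\overline{v}^{\,\natural}$ and $\overline{v^\natural}$ component by component. For $1\leq j<n$ both sides have the form $q^{v_{j+1}}t^{-(\,\cdot\,)}$, so I must show $k_j(v^\natural)=k_{j+1}(v)$. Performing the reindexation $k'=k+1$ in the two sums defining $k_j(v^\natural)$ recovers the $k'\neq 1$ part of $k_{j+1}(v)$; the missing $k'=1$ term $[v_1\geq v_{j+1}]$ is exactly contributed by the wrap-around entry $(v^\natural)_n=v_1+1$ in $k_j(v^\natural)$ via the equivalence $[v_1+1>v_{j+1}]=[v_1\geq v_{j+1}]$. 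For $j=n$ both sides equal $q^{v_1+1}t^{-(\,\cdot\,)}$, and a similar reindexation shows $k_n(v^\natural)=k_1(v)=\#\{k'\geq 2:v_{k'}>v_1\}$.

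The only mildly delicate point is the bookkeeping in part 3, where the $+1$ shift in $(v^\natural)_n=v_1+1$ must convert the strict inequality $>$ into $\geq$ correctly so as to supply the $k'=1$ term that is otherwise absent after reindexation; everything else is immediate from the definitions.
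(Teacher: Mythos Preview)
Your proof is correct: all three parts are indeed direct bookkeeping on the statistic $k_i(v)$, and your case analysis (including the key observation in part~3 that the wrap-around entry $(v^\natural)_n=v_1+1$ converts the strict inequality into $\geq$ so as to supply the missing $k'=1$ term) goes through exactly as stated. The paper does not actually give a proof of this lemma---it merely cites \cite{Kn,S-bin,S-int}---so there is nothing to compare; your direct verification is precisely the kind of argument those references contain.
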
 
%%%%%%%%%%%%%%%%%%%%%%%%%%%%%%%%%
Let $\mathbb{H}$ be the double affine Hecke algebra over $\mathbb{K}$. It is isomorphic to the subalgebra of $\textup{End}(\mathbb{K}[x^{\pm 1}])$ generated
by the operators $H_i$ ($1\leq i<n$), $\Delta^{\pm 1}$, 
and the multiplication operators $x_j^{\pm 1}$ ($1\leq j\leq n$). 

For a unital $\mathbb{K}$-algebra $A$ we write $\mathcal{F}_A$ for the space of $A$-valued functions $f: \mathbb{Z}^n\rightarrow A$ on $\mathbb{Z}^n$.
%%%%%%%%%%%%%%%%%%%%%%%%%%%%%%%%%%
\begin{cor}\label{CORdiscrete}
Let $A$ be a unital $\mathbb{K}$-algebra. Consider the $A$-linear operators $\widehat{H}_i$ ($1\leq i<n$), $\widehat{\Delta}$
and $\widehat{x}_j$ ($1\leq j\leq n$) on $\mathcal{F}_A$ defined by 
\begin{equation}\label{hataction}
\begin{split}
(\widehat{H}_if)(v)&:=tf(v)+\frac{\overline{v}_i-t\overline{v}_{i+1}}{\overline{v}_i-\overline{v}_{i+1}}(f(s_iv)-f(v)),\\
(\widehat{\Delta} f)(v)&:=f(v^\sharp),\qquad (\widehat{\Delta}^{-1}f)(v):=f(v^\natural),\\
 (\widehat{x}_jf)(v)&:=a\overline{v}_jf(v)
\end{split}
\end{equation}
for $f\in\mathcal{F}_A$ and $v\in\mathbb{Z}^n$. Then $H_i\mapsto \widehat{H}_i$ ($1\leq i<n$),
$\Delta\mapsto \widehat{\Delta}$ and $x_j\mapsto \widehat{x}_j$ ($1\leq j\leq n$)
defines a representation $\mathbb{H}\rightarrow\textup{End}_{A}(\mathcal{F}_A)$, $X\mapsto 
\widehat{X}$ ($X\in\mathbb{H}$)
of the double affine Hecke algebra $\mathbb{H}$ on $\mathcal{F}_A$.
\end{cor}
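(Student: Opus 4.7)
The strategy is to transport the DAHA relations from the faithful polynomial representation $\mathbb{H} \hookrightarrow \mathrm{End}(\mathbb{K}[x^{\pm 1}])$ recalled just above the corollary to the hat-operators via an evaluation-interpolation bridge. Concretely, I would introduce the $A$-linear evaluation map $\mathrm{ev}\colon A[x^{\pm 1}] \to \mathcal{F}_A$ defined by $\mathrm{ev}(p)(v) := p(a\overline{v})$, and prove the intertwining identities
\begin{equation*}
\mathrm{ev}(H_i p) = \widehat{H}_i\,\mathrm{ev}(p), \qquad
\mathrm{ev}(\Delta^{\pm 1} p) = \widehat{\Delta}^{\pm 1}\mathrm{ev}(p), \qquad
\mathrm{ev}(x_j^{\pm 1} p) = \widehat{x}_j^{\pm 1}\mathrm{ev}(p)
\end{equation*}
for all $p \in A[x^{\pm 1}]$. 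Each identity is verified pointwise at $v \in \mathbb{Z}^n$ using Lemma \ref{LEMtech}: for $H_i$ with $v_i \neq v_{i+1}$, part (1) gives $s_i(a\overline{v}) = a\overline{s_iv}$, so evaluating the polynomial formula for $H_i p$ at $a\overline{v}$ matches the hat-formula term by term; when $v_i = v_{i+1}$, part (2) forces $\overline{v}_i = t\overline{v}_{i+1}$, so the numerator $\overline{v}_i - t\overline{v}_{i+1}$ vanishes on both sides and both expressions collapse to $t\,\mathrm{ev}(p)(v)$. For $\widehat{\Delta}^{\pm 1}$, part (3) gives $(a\overline{v})^{\sharp/\natural} = a\overline{v^{\sharp/\natural}}$ (the $\natural$-case is direct; the $\sharp$-case follows by inversion), yielding the intertwining immediately. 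The multiplication operator case is tautological.

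With the intertwining established, the corollary follows from a density-style argument. It suffices to show that any word $W$ in the generators that is zero in $\mathbb{H}$ --- equivalently, annihilates $\mathbb{K}[x^{\pm 1}]$ --- induces the zero operator $\widehat{W}$ on $\mathcal{F}_A$. Given $v \in \mathbb{Z}^n$ and $f \in \mathcal{F}_A$, the value $(\widehat{W}f)(v)$ depends only on the restriction of $f$ to a finite subset $S_v \subset \mathbb{Z}^n$ obtained from $v$ by a finite sequence of $s_i$, $\sharp$, and $\natural$ moves. The injectivity of $u \mapsto \overline{u}$ (used throughout the paper) guarantees that the points $\{a\overline{u} : u \in S_v\}$ are pairwise distinct in $\mathbb{K}^n$, so standard multivariate Lagrange interpolation with nodes in $\mathbb{K}^n$ and prescribed values in $A$ produces $p \in A[x^{\pm 1}]$ with $\mathrm{ev}(p)|_{S_v} = f|_{S_v}$. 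Iterating the intertwining then yields
\begin{equation*}
(\widehat{W}f)(v) = (\widehat{W}\,\mathrm{ev}(p))(v) = \mathrm{ev}(Wp)(v) = 0,
\end{equation*}
so $\widehat{W} = 0$ and $X \mapsto \widehat{X}$ descends to a well-defined representation of $\mathbb{H}$.

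The argument is otherwise routine, so the main conceptual obstacle is recognizing that the correct vehicle is the evaluation map $\mathrm{ev}$; once this is in hand, Lemma \ref{LEMtech} is tailored to handle every case of the intertwining, including the potentially delicate stratum $v_i = v_{i+1}$ where one must verify that the vanishing of the numerator in both formulas is the genuine source of the coefficient being zero. The interpolation step is elementary provided one notes that the Lagrange nodes live in the coefficient field $\mathbb{K}$ rather than in the algebra $A$, so no extra hypothesis on $A$ beyond being a unital $\mathbb{K}$-algebra is required.
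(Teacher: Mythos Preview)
Your argument is correct and uses the same core observation as the paper --- that Lemma~\ref{LEMtech} makes the hat-formulas match the evaluation of the polynomial operators at $a\overline{v}$, including on the stratum $v_i=v_{i+1}$ where the numerator $\overline{v}_i-t\overline{v}_{i+1}$ vanishes --- but the packaging is genuinely different. You go ``forward'' via the evaluation map $\mathrm{ev}\colon A[x^{\pm 1}]\to\mathcal{F}_A$, prove intertwining, and then use a finite-support/interpolation argument to extend to all of $\mathcal{F}_A$. The paper instead goes ``backward'': it enlarges the target to the space $F_A^{\mathcal{O}}$ of $A$-valued functions on the $S_n$- and $\natural$-closure $\mathcal{O}$ of $\{a\overline{v}\}$, observes that $\mathcal{O}$ lies in the locus where all coordinates are distinct so that the Demazure--Lusztig formulas act literally and the DAHA relations hold, and then shows that the kernel of the surjective restriction $\mathrm{pr}\colon F_A^{\mathcal{O}}\to\mathcal{F}_A$, $g\mapsto g(a\overline{\,\cdot\,})$, is an $\mathbb{H}$-submodule. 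The quotient $\mathcal{F}_A\simeq F_A^{\mathcal{O}}/\mathrm{Ker}(\mathrm{pr})$ then inherits the $\mathbb{H}$-action, and one checks it is given by the hat-operators. The paper's quotient construction avoids your interpolation step entirely and makes the well-definedness automatic; your route has the advantage of being completely elementary, needing no auxiliary orbit $\mathcal{O}$ and relying only on the faithfulness of the polynomial representation plus finite Lagrange interpolation over $\mathbb{K}$.
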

%%%%%%%%%%%%%%%%%%%%%%%%%%%%%%%%%%%
\begin{proof}
Let $\mathcal{O}\subset \mathbb{K}^n$ be the smallest $S_n$-invariant and $\natural$-invariant subset which contains $\{a\overline{v}\,\, | \,\, v\in\mathbb{Z}^n\}$. Note that  
$\mathcal{O}$ is contained in $\{y\in\mathbb{K}^n \,\, | \,\, y_i\not=y_j \hbox{ if } i\not=j\}$. 
The Demazure-Lusztig operators $H_i$ ($1\leq i<n$), $\Delta^{\pm 1}$ and the coordinate multiplication operators $x_j$ ($1\leq j\leq n$) act $A$-linearly on the space $F_A^{\mathcal{O}}$ of $A$-valued functions on 
$\mathcal{O}$, and hence turns $F_A^{\mathcal{O}}$ 
into a 
$\mathbb{H}$-module. Define the surjective $A$-linear map
\[
\textup{pr}: 
F^{\mathcal{O}}_A\rightarrow\mathcal{F}_A
\]
by $\textup{pr}(g)(v):=g(a\overline{v})$ ($v\in\mathbb{Z}^n$). 

We claim that $\textup{Ker}(\textup{pr})$ is a $\mathbb{H}$-submodule of $F^{\mathcal{O}}_A$.
Clearly $\textup{Ker}(\textup{pr})$ is $x_j$-invariant for $j=1,\ldots,n$. 
Let $g\in\textup{Ker}(\textup{pr})$. 
Part 3 of Lemma \ref{LEMtech} implies that $\Delta g\in\textup{Ker}(\textup{pr})$. To show that $H_ig\in\textup{Ker}(\textup{pr})$
we consider two cases. If $v_i\not=v_{i+1}$ then $s_i\overline{v}=\overline{s_iv}$
by part 1 of Lemma \ref{LEMtech}. Hence 
\[
(H_ig)(a\overline{v})=tg(a\overline{v})+\frac{\overline{v}_i-t\overline{v}_{i+1}}{\overline{v}_i-\overline{v}_{i+1}}(g(a\overline{s_iv})-g(a\overline{v}))=0.
\]
If $v_i=v_{i+1}$ then $\overline{v}_i=t\overline{v}_{i+1}$ by part 2 of Lemma \ref{LEMtech}. Hence 
\[
(H_ig)(\overline{v})=tg(a\overline{v})+\frac{\overline{v}_i-t\overline{v}_{i+1}}{\overline{v}_i-\overline{v}_{i+1}}(g(as_i\overline{v})-g(a\overline{v}))=
tg(a\overline{v})=0.
\]

Hence $\mathcal{F}_A$ inherits the $\mathbb{H}$-module structure of $F^{\mathcal{O}}_A/\textup{Ker}(\textup{pr})$. It is a straightforward
computation, using Lemma \ref{LEMtech} again, to show that the resulting action of $H_i$ ($1\leq i<n$), $\Delta$ and $x_j$ ($1\leq j\leq n$) on $\mathcal{F}_A$ is by the operators $\widehat{H}_i$ ($1\leq i<n$), $\widehat{\Delta}$ and 
$\widehat{x}_j$ ($1\leq j\leq n$).
\end{proof}
%%%%%%%%%%%%%%%%%%%%%%%%%%%%%%%%%%
\begin{rema}\label{HatRelation}
With the notations from (the proof of) Corollary \ref{CORdiscrete}, let $\widetilde{g}\in 
F_A^{\mathcal{O}}$ and set $g:=\textup{pr}(\widetilde{g})\in \mathcal{F}_A$. In other words,
$g(v):=\widetilde{g}(a\overline{v})$ for all $v\in \mathbb{Z}^n$. Then 
\[
\bigl(\widehat{X}g\bigr)(v)=(X\widetilde{g})(a\overline{v}),\qquad v\in\mathbb{Z}^n
\]
for 
$X=H_i,\Delta^{\pm 1}, x_j$.
\end{rema}
%%%%%%%%%%%%%%%%%%%%%%%%%%%%%%%%%%%
\begin{rema}\label{F+}
Let $\mathcal{F}^+_A$ be the space of $A$-valued functions on $\mathcal{C}_n$.
We sometimes will consider $\widehat{H}_i$ ($1\leq i<n$), $\widehat{\Delta}^{-1}$ and $\widehat{x}_j$ ($1\leq j\leq n$), defined by the formulas \eqref{hataction}, as linear operators on $\mathcal{F}^+_A$.
\end{rema}
%%%%%%%%%%%%%%%%%%%%%%%%%%%%%%%%%%%

%%%%%%%%%%%%%%%%%%%%%%%%%%%%%%%%%%%%%%%%%%%
\begin{defi}
We call 
\begin{equation}\label{Kalpha}
K_\alpha(x;q,t;a):=\frac{G_\alpha(x;q,t)}{G_\alpha(a\tau;q,t)}\in\mathbb{K}[x]
\end{equation}
the {\it normalized non-symmetric interpolation Macdonald polynomial} of degree $\alpha$. 
\end{defi}
%%%%%%%%%%%%%%%%%%%%%%%%%%%%%%%%%%%%%%%%%%%%
We frequently use the shorthand notation $K_\alpha(x):=K_\alpha(x;q,t;a)$. We will see in a moment that formulas for non-symmetric interpolation Macdonald polynomials take the nicest form in this particular normalization. 

Note that $a$ cannot be specialized to $1$ in \eqref{Kalpha} since $G_\alpha(\tau)=G_\alpha(\overline{0})=0$ if $0\not=\alpha\in\mathcal{C}_n$.
Note furthermore that 
\begin{equation}\label{limitK}
\lim_{a\rightarrow \infty}K_\alpha(ax)=
\frac{E_\alpha(x)}{E_\alpha(\tau)}
\end{equation}
since $\lim_{a\rightarrow\infty}a^{-|\alpha|}G_\alpha(ax)=E_\alpha(x)$.
 
Recall from \cite{Kn} the operator $\Phi=(x_n-t^{1-n})\Delta\in\mathbb{H}$ and the inhomogeneous Cherednik operators
\[
\Xi_j=\frac{1}{x_j}+\frac{1}{x_j}H_j\cdots H_{n-1}\Phi H_1\cdots H_{j-1}\in\mathbb{H},\qquad 1\leq j\leq n.
\]
The operators $H_i$, $\Xi_j$ and $\Phi$ preserve $\mathbb{K}[x]$ (see \cite{Kn}), hence they give rise to $\mathbb{K}$-linear operators on $\mathcal{F}^+_{\mathbb{K}[x]}$
(e.g., $(H_if)(\alpha):=H_i(f(\alpha))$ for $\alpha\in\mathcal{C}_n$).
Note that the operators $H_i,\Xi_j$ and $\Phi$ on $\mathcal{F}_{\mathbb{K}[x]}^+$ commute with the hat-operators
$\widehat{H}_i$, $\widehat{x}_j$ and $\widehat{\Delta}^{-1}$ on $\mathcal{F}_{\mathbb{K}[x]}^+$
(cf. Remark \ref{F+}). The same remarks hold true for the space 
$\mathcal{F}_{\mathbb{K}(x)}$ of $\mathbb{K}(x)$-valued functions on $\mathbb{Z}^n$ (in fact, in this case the hat-operators define a $\mathbb{H}$-action on $\mathcal{F}_{\mathbb{K}(x)}$).

Let $K\in\mathcal{F}^+_{\mathbb{K}[x]}$ be the map $\alpha\mapsto K_\alpha(\cdot)$ ($\alpha\in\mathcal{C}_n$).

%%%%%%%%%%%%%%%%%%%%%%%%%%%%%%%%%%%
\begin{lem}\label{LEMactiontransfer}
For $1\leq i<n$ and $1\leq j\leq n$ we have in $\mathcal{F}_{\mathbb{K}[x]}^+$,
\begin{enumerate}
\item[{\bf 1.}]
$H_iK=\widehat{H}_iK$.
\item[{\bf 2.}] $\Xi_jK=a\widehat{x}_j^{-1}K$.
\item[{\bf 3.}] $\Phi K=t^{1-n}(a^2\widehat{x}_1^{-1}-1)\widehat{\Delta}^{-1}K$.
\end{enumerate}
\end{lem}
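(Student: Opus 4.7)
The plan is to lift each of the three identities to the unnormalized polynomials $G_\alpha$ using the interpolation/vanishing characterization, and then divide through by $G_\alpha(a\tau)$, matching coefficients against the hat operators via the product formula \eqref{PrincipalEvaluation} and Lemma \ref{LEMtech}.

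Part 2 is the most direct. The definition of $\Xi_j$ is tailored so that $\Xi_j G_\alpha = \overline{\alpha}_j^{-1} G_\alpha$ (this is essentially Knop's theorem \cite{Kn} and is how the inhomogeneous Cherednik operators were introduced). Dividing by the scalar $G_\alpha(a\tau)$ gives $\Xi_j K_\alpha = \overline{\alpha}_j^{-1} K_\alpha$, which exactly matches $a\widehat{x}_j^{-1} K|_\alpha = a(a\overline{\alpha}_j)^{-1} K_\alpha$.

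For Part 3, I would first show $\Phi G_\alpha = c_\alpha G_{\alpha^\natural}$ by the vanishing-degree characterization. Writing $\Phi G_\alpha(x) = (x_n - t^{1-n}) G_\alpha(q^{-1}x_n, x_1,\ldots,x_{n-1})$ and using Lemma \ref{LEMtech}.3, the evaluation at $\overline{\beta}$ equals $(\overline{\beta}_n - t^{1-n}) G_\alpha(\overline{\beta^\sharp})$. When $\beta_n = 0$ the prefactor vanishes since $\overline{\beta}_n = t^{1-n}$; when $\beta_n \geq 1$, $\beta^\sharp \in \mathcal{C}_n$ and $G_\alpha(\overline{\beta^\sharp}) = 0$ unless $\beta^\sharp = \alpha$, i.e.\ $\beta = \alpha^\natural$. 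Combined with $\deg(\Phi G_\alpha) \leq |\alpha^\natural|$, this forces the proportionality. The constant $c_\alpha$ is then read off by evaluation at $\overline{\alpha^\natural}$, and the passage to the normalized version reduces to computing the ratio $c_\alpha G_{\alpha^\natural}(a\tau)/G_\alpha(a\tau)$ via \eqref{PrincipalEvaluation} and checking that it collapses to $t^{1-n}(a\overline{\alpha}_1^{-1} - 1)$.

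Part 1 is handled analogously but with a two-term recursion. Using the intertwining of $H_i$ with all $\Xi_k$ for $k \neq i, i+1$ together with the swap relation at positions $i, i+1$, $H_i G_\alpha$ lies in the joint eigenspace spanned by $G_\alpha$ and $G_{s_i\alpha}$. The two coefficients are determined by evaluating at $\overline{\alpha}$ and $\overline{s_i\alpha}$, using Lemma \ref{LEMtech}.1 to pass $s_i$ through the overline when $\alpha_i \neq \alpha_{i+1}$. When $\alpha_i = \alpha_{i+1}$, Lemma \ref{LEMtech}.2 forces $\overline{\alpha}_i = t\overline{\alpha}_{i+1}$, so the rational prefactor in $H_i$ vanishes, yielding $H_iG_\alpha = tG_\alpha$ and matching $\widehat{H}_iK|_\alpha = tK_\alpha$. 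Dividing by $G_\alpha(a\tau)$ produces the coefficient $B\cdot G_{s_i\alpha}(a\tau)/G_\alpha(a\tau)$ which — the key computation — simplifies via \eqref{PrincipalEvaluation} to $(\overline{\alpha}_i - t\overline{\alpha}_{i+1})/(\overline{\alpha}_i - \overline{\alpha}_{i+1})$. The main obstacle across both Parts 1 and 3 is precisely this telescoping: the arm, leg, coarm and coleg statistics transform asymmetrically under $\alpha \leftrightarrow s_i\alpha$ and under $\alpha \mapsto \alpha^\natural$, so splitting into the cases $\alpha_i \lessgtr \alpha_{i+1}$ and verifying the cancellations is where the bulk of the work sits.
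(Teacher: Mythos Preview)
Your proposal is correct and follows essentially the same approach as the paper. The only difference is one of packaging: where you derive the action formulas $H_iG_\alpha \in \mathrm{span}\{G_\alpha,G_{s_i\alpha}\}$ and $\Phi G_\alpha = c_\alpha G_{\alpha^\natural}$ from scratch via the vanishing/degree characterization and the intertwining relations, the paper simply cites these as \cite[Lem.~2.2]{S-bin} and \cite[Cor.~3.4]{Kn}; likewise the paper quotes the evaluation ratio $G_\alpha(a\tau)=\frac{\overline{\alpha}_{i+1}-t\overline{\alpha}_i}{\overline{\alpha}_{i+1}-\overline{\alpha}_i}G_{s_i\alpha}(a\tau)$ from \cite[Lem.~3.1]{S-bin} rather than extracting it directly from \eqref{PrincipalEvaluation}, but the substance of the argument---pass from $G_\alpha$ to $K_\alpha$ by dividing through and matching the resulting coefficients against the hat operators---is identical.
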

%%%%%%%%%%%%%%%%%%%%%%%%%%%%%%%%%
\begin{proof}
{\bf 1.} To derive the formula we need to expand $H_iK_\alpha$ as a linear combination of the $K_\beta$'s. As a first step we expand $H_iG_\alpha$ as linear combination of the $G_\beta$'s.

If $\alpha\in\mathcal{C}_n$ satisfies $\alpha_i<\alpha_{i+1}$ then 
\[
H_iG_\alpha(x)=\frac{(t-1)\overline{\alpha}_i}{\overline{\alpha}_i-\overline{\alpha}_{i+1}}G_\alpha(x)+G_{s_i\alpha}(x)
\]
by \cite[Lem. 2.2]{S-bin}.
Using part 1 of Lemma \ref{LEMtech} and the fact that $H_i$ satisfies the quadratic relation $(H_i-t)(H_i+1)=0$, it follows that
\[
H_iG_\alpha(x)=\frac{(t-1)\overline{\alpha}_i}{\overline{\alpha}_i-\overline{\alpha}_{i+1}}G_\alpha(x)+
\frac{t(\overline{\alpha}_{i+1}-t\overline{\alpha}_i)(\overline{\alpha}_{i+1}-t^{-1}\overline{\alpha}_i)}{(\overline{\alpha}_{i+1}-\overline{\alpha}_i)^2}
G_{s_i\alpha}(x)
\]
if $\alpha\in\mathcal{C}_n$ satisfies $\alpha_i>\alpha_{i+1}$. 
Finally, $H_iG_\alpha(x)=tG_\alpha(x)$ if $\alpha\in\mathcal{C}_n$ satisfies $\alpha_i=\alpha_{i+1}$ by \cite[Cor. 3.4]{Kn}. 

An explicit expansion of $H_iK_\alpha$ as linear combination of the $K_\beta$'s can now be obtained using the formula
\begin{equation*}
G_\alpha(a\tau)=\frac{\overline{\alpha}_{i+1}-t\overline{\alpha}_i}{\overline{\alpha}_{i+1}-\overline{\alpha}_i}G_{s_i\alpha}(a\tau)
\end{equation*}
for $\alpha\in\mathcal{C}_n$ satisfying $\alpha_i>\alpha_{i+1}$, cf.\ the proof of \cite[Lem 3.1]{S-bin}. By a direct computation the resulting expansion formula can be written as
$H_iK=\widehat{H}_iK$.\\
{\bf 2.} See \cite[Thm. 2.6]{Kn}.\\
{\bf 3.} Let $\alpha\in\mathcal{C}_n$. By \cite[Lem. 2.2 (1)]{S-bin},
\[
\Phi G_\alpha(x)=q^{-\alpha_1}G_{\alpha^\natural}(x).
\]
By the evaluation formula \eqref{PrincipalEvaluation} we have
\[
\frac{G_{\alpha^\natural}(a\tau)}{G_{\alpha}(a\tau)}=at^{1-n+k_1(\alpha)}-q^{\alpha_1}t^{1-n}.
\]
Hence 
\[
\Phi K_\alpha(x)=t^{1-n}(a\overline{\alpha}_1^{-1}-1)K_{\alpha^\natural}(x).
\]
\end{proof}
%%%%%%%%%%%%%%%%%%%%%%%%%%%%%%%%%%%%%%%%%%%
\begin{rema}
Note that
\[
\Phi K_\alpha(x)=(a\widetilde{\alpha}_n-t^{1-n})K_{\alpha^\natural}(x)
\]
for $\alpha\in\mathcal{C}_n$ since $\overline{\alpha}^{-1}=t^{n-1}w_0\widetilde{\alpha}$.
\end{rema}
%%%%%%%%%%%%%%%%%%%%%%%%%%%%%%%%%%%%%%%%%%%
%%%%%%%%%%%%%%%%%%%%%%%%%%%%%%%%%%%%%%%%%%%%
\section{Interpolation Macdonald polynomials with negative degrees}
%%%%%%%%%%%%%%%%%%%%%%%%%%%%%%%%%%%%%%%%%%%%%
In this section we give the natural extension of the interpolation Macdonald polynomials $G_\alpha(x)$ and $K_\alpha(x)$ to $\alpha\in\mathbb{Z}^n$. It will be the unique extension of 
$K\in\mathcal{F}_{\mathbb{K}[x]}^+$ to a map $K\in\mathcal{F}_{\mathbb{K}(x)}$
such that Lemma \ref{LEMactiontransfer} remains valid.

%%%%%%%%%%%%%%%%%%%%%%%%%%%%%%%%%%%%%%%%%%%%%%
\begin{lem}\label{LEMfullshift}
For $\alpha\in\mathcal{C}_n$ we have
\begin{equation*}
\begin{split}
G_\alpha(x)&=q^{-|\alpha|}\frac{G_{\alpha+(1^n)}(qx)}{\prod_{i=1}^n(qx_i-t^{1-n})},\\
K_\alpha(x)&=\Bigl(\prod_{i=1}^n\frac{(1-a\overline{\alpha}_i^{-1})}{(1-qt^{n-1}x_i)}\Bigr)K_{\alpha+(1^n)}(qx).
\end{split}
\end{equation*}
\end{lem}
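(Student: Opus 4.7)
The plan is to prove the two identities independently, the first by the vanishing characterization of $G_{\alpha+(1^n)}$ and the second by iterating Lemma~\ref{LEMactiontransfer}(3) together with an explicit computation of the operator $\Phi^n$. Handling them separately avoids any need to evaluate the ratio $G_{\alpha+(1^n)}(a\tau)/G_\alpha(a\tau)$ on its own, though combining the two identities afterwards yields a closed form for this ratio as a by-product.

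For the first identity, set $P(x):=q^{|\alpha|}\prod_{i=1}^n(x_i-t^{1-n})\,G_\alpha(x/q)$. Then $P$ is a polynomial of degree at most $|\alpha|+n=|\alpha+(1^n)|$, and the coefficient of $x^{\alpha+(1^n)}$ in $P$ receives a contribution only from the top-degree piece $\prod_i x_i\cdot E_\alpha(x/q)$, giving $q^{|\alpha|}\cdot q^{-|\alpha|}=1$. For the vanishing at $\overline{\beta}$ with $\beta\in\mathcal{C}_n$, $|\beta|\le|\alpha|+n$, and $\beta\neq\alpha+(1^n)$, split into two cases. If $\beta$ has a zero entry and $i$ denotes its last occurrence, then $k_i(\beta)=(i-1)+(n-i)=n-1$, so $\overline{\beta}_i=t^{1-n}$ and the product factor vanishes. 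Otherwise $\gamma:=\beta-(1^n)\in\mathcal{C}_n$; since adding $(1^n)$ preserves the sorting permutation one has $\overline{\beta}=q\overline{\gamma}$, so $G_\alpha(\overline{\beta}/q)=G_\alpha(\overline{\gamma})=0$ because $|\gamma|\le|\alpha|$ and $\gamma\neq\alpha$. By uniqueness $P=G_{\alpha+(1^n)}$, and the substitution $x\mapsto qx$ gives the first identity.

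For the second identity, iterate the formula $\Phi K_\alpha=t^{1-n}(a\overline{\alpha}_1^{-1}-1)\,K_{\alpha^\natural}$ that is established in the proof of Lemma~\ref{LEMactiontransfer}(3). Since $\alpha^{\natural^n}=\alpha+(1^n)$ and $\overline{\alpha^{\natural^i}}_1=\overline{\alpha}_{i+1}$ for $i=0,\ldots,n-1$ (by induction from part~3 of Lemma~\ref{LEMtech}), this yields
\[
\Phi^n K_\alpha=t^{n(1-n)}\prod_{i=1}^n(a\overline{\alpha}_i^{-1}-1)\,K_{\alpha+(1^n)}.
\]
On the other hand, $\Phi^n$ can be computed directly as an operator: using $\Delta x_j=x_{j-1}\Delta$ for $j\ge 2$ and $\Delta x_1=q^{-1}x_n\Delta$, a straightforward induction passes all the $\Delta$'s past the multiplications to produce $\Phi^n=\prod_{i=1}^n(x_i-t^{1-n})\,\Delta^n$ with $(\Delta^n f)(x)=f(q^{-1}x)$. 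Equating both expressions for $\Phi^n K_\alpha$ and substituting $x\mapsto qx$ gives
\[
\prod_i(qx_i-t^{1-n})\,K_\alpha(x)=t^{n(1-n)}\prod_i(a\overline{\alpha}_i^{-1}-1)\,K_{\alpha+(1^n)}(qx).
\]
Rewriting $qx_i-t^{1-n}=-t^{1-n}(1-qt^{n-1}x_i)$ and $a\overline{\alpha}_i^{-1}-1=-(1-a\overline{\alpha}_i^{-1})$, the factor $(-t^{1-n})^n=(-1)^n t^{n(1-n)}$ cancels on both sides, leaving the second identity.

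The principal obstacle is careful bookkeeping: verifying the shift identities $(\alpha^{\natural^i})_1=\alpha_{i+1}$ and $\overline{\alpha^{\natural^i}}_1=\overline{\alpha}_{i+1}$ by induction from Lemma~\ref{LEMtech}(3), establishing the operator identity $\Phi^n=\prod_i(x_i-t^{1-n})\Delta^n$ from the basic commutation relations above, and tracking the parallel cancellation of signs and powers of $t^{1-n}$ on the two sides. All the ingredients needed are directly available from the preceding sections.
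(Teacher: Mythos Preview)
Your proof is correct. For the second identity your argument is exactly the paper's: compute $\Phi^n$ as the operator $\prod_i(x_i-t^{1-n})\cdot\Delta^n$ and iterate the formula $\Phi K_\alpha=t^{1-n}(a\overline{\alpha}_1^{-1}-1)K_{\alpha^\natural}$ from Lemma~\ref{LEMactiontransfer}(3), tracking the shift $\overline{\alpha^{\natural^i}}_1=\overline{\alpha}_{i+1}$ via Lemma~\ref{LEMtech}(3).

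For the first identity you take a genuinely different route. The paper proves it in complete parallel with the second, using the same operator identity for $\Phi^n$ together with the iteration of $\Phi G_\alpha=q^{-\alpha_1}G_{\alpha^\natural}$ from \cite[Lem.~2.2(1)]{S-bin}, which yields $\Phi^nG_\alpha=q^{-|\alpha|}G_{\alpha+(1^n)}$ directly. You instead verify the defining degree, leading-coefficient, and vanishing conditions for $G_{\alpha+(1^n)}$ by hand, splitting the vanishing into the cases ``$\beta$ has a zero entry'' (where the product factor kills $P$) and ``$\beta-(1^n)\in\mathcal{C}_n$'' (where the vanishing of $G_\alpha$ kicks in). Your approach is more self-contained, avoiding the external citation, while the paper's is shorter and treats both identities by a single uniform mechanism; either way the content is the same once one observes that \cite[Lem.~2.2(1)]{S-bin} is itself proved by precisely the vanishing argument you give.
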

%%%%%%%%%%%%%%%%%%%%%%%%%%%%%%%%%%%%%%%%%%%%%%%
\begin{proof}
Note that for $f\in\mathbb{K}[x]$,
\[
\Phi^nf(x)=\Bigl(\prod_{i=1}^n(x_i-t^{1-n})\Bigr)f(q^{-1}x).
\]
The first formula then follows by iteration of \cite[Lem. 2.2(1)]{S-bin} and the second formula from part 3 of Lemma \ref{LEMactiontransfer}.
\end{proof}
%%%%%%%%%%%%%%%%%%%%%%%%%%%%%%%%%%%%%%%%%%%%%%%%%
For $m\in\mathbb{Z}_{\geq 0}$ we define $A_m(x;v)\in\mathbb{K}(x)$ by 
\begin{equation}\label{Am}
A_m(x;v):=\prod_{i=1}^n\frac{\bigl(q^{1-m}a\overline{v}_i^{-1};q\bigr)_m}{\bigl(qt^{n-1}x_i;q\bigr)_m} \qquad\forall\,v\in\mathbb{Z}^n,
\end{equation}
with $\bigl(y;q\bigr)_m:=\prod_{j=0}^{m-1}(1-q^jy)$ the $q$-shifted factorial.
%%%%%%%%%%%%%%%%%%%%%%%%%%%%%%%%%%%%%%%%%%%%%%%%%
\begin{defi}\label{DEFnegdegree}
Let $v\in\mathbb{Z}^n$ and write $|v|:=v_1+\cdots+v_n$.  Define
$G_v(x)=G_v(x;q,t)\in\mathbb{F}(x)$ and $K_v(x)=K_v(x;q,t;a)\in\mathbb{K}(x)$ by
\begin{equation*}
\begin{split}
G_v(x)&:=q^{-m|v|-m^2n}\frac{G_{v+(m^n)}(q^mx)}{\prod_{i=1}^nx_i^m\bigl(q^{-m}t^{1-n}x_i^{-1};q\bigr)_{m}},\\
K_v(x)&:=A_m(x;v)
K_{v+(m^n)}(q^mx)
\end{split}
\end{equation*}
where $m$ is a nonnegative integer such that $v+(m^n)\in\mathcal{C}_n$ (note that $G_v$ and $K_v$ are well defined by Lemma \ref{LEMfullshift}).
\end{defi}
%%%%%%%%%%%%%%%%%%%%%%%%%%%%%%%%%%%%%%%%%%%%%%%%%%
%%%%%%%%%%%%%%%%%%%%%%%%%%%%%%%%%%%%%%%%%%%%%
\begin{eg}\label{n=1example}
If $n=1$ then for $m\in\mathbb{Z}_{\geq 0}$,
\begin{equation*}
K_{-m}(x)=\frac{\bigl(qa;q\bigr)_m}{\bigl(qx;q\bigr)_m},\qquad
K_m(x)=\Bigl(\frac{x}{a}\Bigr)^m\frac{\bigl(x^{-1};q\bigr)_m}{\bigl(a^{-1};q\bigr)_m}.
\end{equation*}
\end{eg}
%%%%%%%%%%%%%%%%%%%%%%%%%%%%%%%%%%%%%%%%%%%%%

%%%%%%%%%%%%%%%%%%%%%%%%%%%%%%%%%%%%%%%%%
\begin{lem}\label{LEMstep2}
For all $v\in\mathbb{Z}^n$,
\[
K_v(x)=\frac{G_v(x)}{G_v(a\tau)}.
\]
\end{lem}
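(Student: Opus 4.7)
The plan is to unpack Definition \ref{DEFnegdegree} on both sides and reduce to an identity involving only $G_\alpha$ for $\alpha:=v+(m^n)\in\mathcal{C}_n$. Writing $D(x):=\prod_{i=1}^n x_i^m\bigl(q^{-m}t^{1-n}x_i^{-1};q\bigr)_m$, direct substitution into the two definitions, combined with the known relation $K_\alpha=G_\alpha/G_\alpha(a\tau)$ for $\alpha\in\mathcal{C}_n$, gives
\[
\frac{G_v(x)}{G_v(a\tau)}=\frac{G_\alpha(q^m x)\,D(a\tau)}{G_\alpha(q^m a\tau)\,D(x)},\qquad K_v(x)=A_m(x;v)\,\frac{G_\alpha(q^m x)}{G_\alpha(a\tau)}.
\]
Cancelling the common nonzero factor $G_\alpha(q^m x)$, the lemma reduces to the single identity $A_m(x;v)\,D(x)\,G_\alpha(q^m a\tau)=D(a\tau)\,G_\alpha(a\tau)$.

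Next I would observe that $A_m(x;v)\,D(x)$ is actually independent of $x$: the rewriting $x_i^m\bigl(q^{-m}t^{1-n}x_i^{-1};q\bigr)_m=(-1)^m q^{-m(m+1)/2}t^{m(1-n)}\bigl(qt^{n-1}x_i;q\bigr)_m$ shows that the $x$-dependent factors in $D(x)$ cancel exactly against the denominator $\prod_i\bigl(qt^{n-1}x_i;q\bigr)_m$ of $A_m(x;v)$. Evaluating the identity at $x=a\tau$ and using $\overline{v}_i=q^{-m}\overline{\alpha}_i$ to re-express $A_m(a\tau;v)$ purely in terms of $\alpha$, the whole problem collapses to the intrinsic identity
\[
G_\alpha(q^m a\tau)\prod_{i=1}^n\bigl(qa\overline{\alpha}_i^{-1};q\bigr)_m\;=\;G_\alpha(a\tau)\prod_{i=1}^n\bigl(qat^{n-i};q\bigr)_m\qquad(\ast)
\]
for all $\alpha\in\mathcal{C}_n$ and $m\geq 0$.

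To prove $(\ast)$ I would invoke the explicit evaluation formula \eqref{PrincipalEvaluation}. Since $q^m a\tau=(aq^m)\tau$, the $a$-independent prefactor of \eqref{PrincipalEvaluation} cancels in the ratio $G_\alpha(q^m a\tau)/G_\alpha(a\tau)$, leaving the product $\prod_{s\in\alpha}(aq^m t^{l'(s)}-q^{a'(s)})/(at^{l'(s)}-q^{a'(s)})$. Grouping boxes row-by-row and using $a'(i,j)=j-1$, $l'(i,j)=k_i(\alpha)$, and $t^{k_i(\alpha)}=q^{\alpha_i}\overline{\alpha}_i^{-1}$, the product over $j=1,\ldots,\alpha_i$ for fixed $i$ becomes $\bigl(q^{m+1}y;q\bigr)_{\alpha_i}/\bigl(qy;q\bigr)_{\alpha_i}$ with $y=a\overline{\alpha}_i^{-1}$, which by the elementary Pochhammer identity $\bigl(q^{m+1}y;q\bigr)_{\alpha_i}/\bigl(qy;q\bigr)_{\alpha_i}=\bigl(q^{\alpha_i+1}y;q\bigr)_m/\bigl(qy;q\bigr)_m$ equals $\bigl(qat^{k_i(\alpha)};q\bigr)_m/\bigl(qa\overline{\alpha}_i^{-1};q\bigr)_m$. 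For indices $i$ with $\alpha_i=0$ the same ratio is trivially $1$ (as then $\overline{\alpha}_i^{-1}=t^{k_i(\alpha)}$), so the product extends harmlessly to all $i$.

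To finish, the map $i\mapsto k_i(\alpha)$ is a bijection of $\{1,\ldots,n\}$ onto $\{0,1,\ldots,n-1\}$ — a standard consequence of the pairwise distinctness of the coordinates of $\overline{\alpha}$ — so $\prod_i\bigl(qat^{k_i(\alpha)};q\bigr)_m=\prod_i\bigl(qat^{n-i};q\bigr)_m$, establishing $(\ast)$ and hence the lemma. The main obstacle I anticipate is the row-by-row $q$-Pochhammer bookkeeping in the middle step — correctly packaging the product over boxes $s\in\alpha$ as per-row telescopes and matching the result to the intrinsic data attached to $\alpha$; everything else is unpacking and cancellation.
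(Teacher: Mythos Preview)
Your proof is correct and follows essentially the same route as the paper: both reduce to verifying that $A_m(a\tau;v)\,G_{v+(m^n)}(q^ma\tau)=G_{v+(m^n)}(a\tau)$, and both establish this via the explicit evaluation formula \eqref{PrincipalEvaluation}. The paper packages the reduction slightly more cleanly---observing first that $G_v$ and $K_v$ are proportional (your computation that $A_m(x;v)D(x)$ is $x$-independent is exactly this observation) and then checking $K_v(a\tau)=1$---whereas you unpack both sides explicitly and spell out the row-by-row Pochhammer telescoping that the paper leaves as ``a direct computation''.
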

%%%%%%%%%%%%%%%%%%%%%%%%%%%%%%%%%%%%%%%%%%
\begin{proof}
Let $v\in\mathbb{Z}^n$. Clearly $G_v(x)$ and $K_v(x)$ only differ by a multiplicative constant, so it suffices to show that
$K_v(a\tau)=1$. Fix $m\in\mathbb{Z}_{\geq 0}$ such that $v+(m^n)\in\mathcal{C}_n$.
Then
\[
K_v(a\tau)=A_m(a\tau;v)K_{v+(m^n)}(q^ma\tau)=
A_m(a\tau;v)\frac{G_{v+(m^n)}(q^ma\tau)}{G_{v+(m^n)}(a\tau)}=1,
\]
where the last formula follows from a direct computation using the evaluation formula
\eqref{PrincipalEvaluation}.
\end{proof}
%%%%%%%%%%%%%%%%%%%%%%%%%%%%%%%%%%%%%%%%%

We extend the map $K: \mathcal{C}_n\rightarrow \mathbb{K}[x]$
to a map 
\[
K:\mathbb{Z}^n\rightarrow \mathbb{K}(x)
\]
by setting $v\mapsto K_v(x)$ for all $v\in\mathbb{Z}^n$.  Lemma \ref{LEMactiontransfer} now extends as follows.
%%%%%%%%%%%%%%%%%%%%%%%%%%%%%%%%%%%%%%%%%%%%%%%%%%%
\begin{prop}\label{PROPactionsame}
We have, as identities in $\mathcal{F}_{\mathbb{K}(x)}$,
\begin{enumerate}
\item[{\bf 1.}]
$H_iK=\widehat{H}_iK$.
\item[{\bf 2.}] $\Xi_jK=a\widehat{x}_j^{-1}K$.
\item[{\bf 3.}] $\Phi K=t^{1-n}(a^2\widehat{x}_1^{-1}-1)\widehat{\Delta}^{-1}K$.
\end{enumerate}
\end{prop}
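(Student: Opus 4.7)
Given $v\in\mathbb{Z}^n$, I would fix $m\geq 0$ with $v':=v+(m^n)\in\mathcal{C}_n$, so that by Definition \ref{DEFnegdegree}, $K_v(x)=A_m(x;v)K_{v'}(q^mx)$. The plan is to reduce each of (1)--(3) to the corresponding statement of Lemma \ref{LEMactiontransfer} applied at $v'$ by transferring the polynomial-side operators $H_i$, $\Phi$, $\Xi_j$ through this identity.

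The technical heart of the argument will be the factorization $A_m(x;v)=C(v)/D(x)$, where $C(v):=\prod_i(q^{1-m}a\overline{v}_i^{-1};q)_m$ is a scalar in $v$ and $D(x):=\prod_i(qt^{n-1}x_i;q)_m$ is $S_n$-symmetric in $x$. It follows that each $H_i$ commutes with multiplication by $D(x)^{-1}$ (by $s_i$-symmetry) and with the substitution $x\mapsto q^mx$ (by scale invariance of $(x_i-tx_{i+1})/(x_i-x_{i+1})$). A short telescoping computation using $\Delta D(x)^{-1}=D(x)^{-1}(1-q^mt^{n-1}x_n)/(1-t^{n-1}x_n)$ together with $x_n-t^{1-n}=-t^{1-n}(1-t^{n-1}x_n)$ will then yield the clean conjugation
\[
\Phi\bigl(D(x)^{-1}h(q^mx)\bigr)=D(x)^{-1}(\Phi h)(q^mx)
\]
for every $h\in\mathbb{K}(x)$, so that $\Phi$ too passes through both the prefactor $D(x)^{-1}$ and the $q^m$-rescaling.

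With these two commutations available, parts (1) and (3) should follow directly. For (1) I would write $H_iK_v(x)=C(v)D(x)^{-1}(H_iK_{v'})(q^mx)$ and apply Lemma \ref{LEMactiontransfer}(1) at $v'$, using $A_m(x;s_iv)=A_m(x;v)$ (because the multiset $\{\overline{v}_j\}$ is permutation-invariant by Lemma \ref{LEMtech}(1)) and $s_iv'=(s_iv)'$ to reassemble $(\widehat{H}_iK)(v)$. For (3), the same manipulation yields $\Phi K_v(x)=C(v)D(x)^{-1}(\Phi K_{v'})(q^mx)$; Lemma \ref{LEMactiontransfer}(3) at $v'$ produces $t^{1-n}(aq^{-m}\overline{v}_1^{-1}-1)K_{(v^\natural)'}(q^mx)$, which I would rewrite as $t^{1-n}(a\overline{v}_1^{-1}-1)K_{v^\natural}(x)$ via the elementary telescoping $C(v)/C(v^\natural)=(a\overline{v}_1^{-1}-1)/(aq^{-m}\overline{v}_1^{-1}-1)$---a consequence of $\overline{v^\natural}_i=\overline{v}_{i+1}$ for $i<n$ and $\overline{v^\natural}_n=q\overline{v}_1$, so that only the $i=1$ and $i=n$ factors of the Pochhammer products differ.

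Part (2) is the most intricate and is where I expect the main work. I would start from $\Xi_j=x_j^{-1}(1+H_j\cdots H_{n-1}\Phi H_1\cdots H_{j-1})$ and iterate the two commutations through each $H_i$ and through the single occurrence of $\Phi$, producing
\[
x_j\Xi_jK_v(x)=C(v)D(x)^{-1}\bigl(x_j\Xi_jK_{v'}\bigr)(q^mx).
\]
Lemma \ref{LEMactiontransfer}(2) at $v'\in\mathcal{C}_n$ rewrites the bracket as $y_j\overline{v'}_j^{-1}K_{v'}(y)\big|_{y=q^mx}$, and since $y_j|_{y=q^mx}=q^mx_j$ and $\overline{v'}_j=q^m\overline{v}_j$, the two factors of $q^m$ cancel to leave $x_j\overline{v}_j^{-1}K_v(x)$; division by $x_j$ yields $(\Xi_jK)(v)=\overline{v}_j^{-1}K_v(x)=(a\widehat{x}_j^{-1}K)(v)$. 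The only genuinely delicate step in the entire plan is the one-time verification of the $\Phi$-conjugation identity displayed above; once that is in hand, every remaining manipulation is routine propagation through a symmetric, scale-respecting prefactor.
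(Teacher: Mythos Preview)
Your proposal is correct and follows essentially the same strategy as the paper: both reduce to Lemma~\ref{LEMactiontransfer} at $v'=v+(m^n)$ by commuting $H_i$ and $\Phi$ through the prefactor $A_m(x;v)$ and the $q^m$-rescaling, using symmetry of $A_m$ in $x$ and in the $\overline{v}_j$. Your conjugation identity $\Phi\bigl(D(x)^{-1}h(q^mx)\bigr)=D(x)^{-1}(\Phi h)(q^mx)$ is exactly the paper's relation $\Phi\circ A_m=A_m\circ\Phi^{(q^m)}$ with $\Phi^{(q^m)}=(q^mx_n-t^{1-n})\Delta$, repackaged so that the rescaling is absorbed; the telescoping you describe for $C(v)/C(v^\natural)$ is the one hidden computation the paper leaves implicit.
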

%%%%%%%%%%%%%%%%%%%%%%%%%%%%%%%%%%%%%%%%%%%%%%%%%%%
\begin{proof}
Write $A_m\in\mathcal{F}_{\mathbb{K}(x)}$ for the map
$v\mapsto A_m(x;v)$ for $v\in\mathbb{Z}^n$.
Consider the linear operator on $\mathcal{F}_{\mathbb{K}(x)}$ defined by 
$(A_mf)(v):=A_m(x;v)f(v)$ for $v\in \mathbb{Z}^n$ and $f\in\mathcal{F}_{\mathbb{K}(x)}$.
For $1\leq i<n$ we have $[H_i,A_m]=0$  as linear operators on $\mathcal{F}_{\mathbb{K}(x)}$,
since $A_m(x;v)$ is a symmetric rational function in $x_1,\ldots,x_n$.  Furthermore, for $v\in\mathbb{Z}^n$ and $f\in\mathcal{F}_{\mathbb{K}(x)}$,
\begin{equation}\label{formula1}
\bigl((\widehat{H}_i\circ A_m)f\bigr)(v)=\bigl((A_m\circ \widehat{H}_i)f\bigr)(v)\quad
\hbox{ if }\,\,\, v_i\not=v_{i+1}
\end{equation}
by part 2 of Lemma \ref{LEMtech} and the fact that $A_m(x;v)$ is symmetric in $\overline{v}_1,\ldots,\overline{v}_n$.
Fix $v\in\mathbb{Z}^n$ and choose $m\in\mathbb{Z}_{\geq 0}$ such that $v+(m^n)\in
\mathcal{C}_n$. Since 
\[
K_v(x)=A_m(x;v)K_{v+(m^n)}(q^mx)
\]
we obtain from $[H_i,A_m]=0$ and \eqref{formula1} that $(H_iK)(v)=(\widehat{H}_iK)(v)$
if $v_i\not=v_{i+1}$. This also holds true if $v_i=v_{i+1}$ since then $(\widehat{H}_iK)(v)=
tK_v$ and $H_iK_{v+(m^n)}(q^mx)=tK_{v+(m^n)}(q^mx)$. This proves part 1 of the proposition.

Note that $\Phi K_v(x)=t^{1-n}(a\overline{v}_1^{-1}-1)K_{v^\natural}(x)$ for arbitrary $v\in\mathbb{Z}^n$ by Lemma \ref{LEMactiontransfer} and the commutation relation
\begin{equation}\label{formula2}
\Phi\circ A_m=A_m\circ \Phi^{(q^m)},
\end{equation}
where $\Phi^{(q^m)}:=(q^mx_n-t^{1-n})\Delta$. This proves part 3 of the proposition.

Finally we have $\Xi_jK_v(x)=\overline{v}_j^{-1}K_v(x)$ for all $v\in\mathbb{Z}^n$ by $[H_i,A_m]=0$, \eqref{formula2} and Lemma \ref{LEMactiontransfer}. This proves part 2 of the proposition.
\end{proof}
%%%%%%%%%%%%%%%%%%%%%%%%%%%%%%%%%%%%%%%%%%%%

%%%%%%%%%%%%%%%%%%%%%%%%%%%%%%%%%%%%%
\section{Duality of the non-symmetric interpolation Macdonald polynomials}
%%%%%%%%%%%%%%%%%%%%%%%%%%%%%%%%%%%%%

Recall
the notation $\widetilde{v}=\overline{-w_0v}$ for $v\in\mathbb{Z}^n$. 
%%%%%%%%%%%%%%%%%%%%%%%%%%
\begin{thm}[Duality]\label{THMsymmetry}
For all $u,v\in\mathbb{Z}^n$ we have
\begin{equation}\label{Fsymmetry}
K_u(a\widetilde{v})=K_v(a\widetilde{u}).
\end{equation}
\end{thm}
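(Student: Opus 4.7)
My plan is to propagate the trivial base-case symmetry $F(u,0) = F(0,u) = 1$ throughout $\mathbb{Z}^n \times \mathbb{Z}^n$ using the bispectral equations furnished by Proposition \ref{PROPactionsame}, where I write $F(u,v) := K_u(a\widetilde{v})$. The base cases are immediate: $F(0,v) = K_0(a\widetilde{v}) = 1$ because $K_0 \equiv 1$, and $F(u,0) = K_u(a\widetilde{0}) = K_u(a\tau) = 1$ by the normalization of $K_u$.

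The key input is a pair of geometric identities that convert the $x$-side DAHA action, evaluated at $a\widetilde{v}$, into discrete operators on the index $v$. Using $\widetilde{v} = \overline{-w_0 v}$, the braid identity $w_0 s_i w_0 = s_{n-i}$, and Lemma \ref{LEMtech}, one checks that $s_i(a\widetilde{v}) = a\widetilde{s_{n-i}v}$ (when $v_{n-i} \neq v_{n+1-i}$) and $(q^{-1}\widetilde{v}_n, \widetilde{v}_1, \ldots, \widetilde{v}_{n-1}) = \widetilde{v^\natural}$. Substituting these into the intertwining relations of Proposition \ref{PROPactionsame} evaluated at $x = a\widetilde{v}$ yields bispectral identities such as
\[
(a\widetilde{v}_n - t^{1-n}) F(u, v^\natural) = t^{1-n}(a\overline{u}_1^{-1} - 1) F(u^\natural, v)
\]
from the $\Phi$-intertwiner, together with the Hecke-type relation
\[
\frac{\widetilde{v}_i - t\widetilde{v}_{i+1}}{\widetilde{v}_i - \widetilde{v}_{i+1}} \bigl(F(u, s_{n-i}v) - F(u,v)\bigr) = \frac{\overline{u}_i - t\overline{u}_{i+1}}{\overline{u}_i - \overline{u}_{i+1}} \bigl(F(s_iu,v) - F(u,v)\bigr)
\]
from the $H_i$-intertwiner, plus the eigenvalue relation coming from $\Xi_j K = a\widehat{x}_j^{-1} K$.

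Since the extended affine Weyl group generated by the $s_i$'s and $\natural^{\pm 1}$ acts transitively on $\mathbb{Z}^n$, any pair $(u,v)$ is reachable from the base point $(0,0)$ by a finite sequence of one-step moves in either argument. The bispectral identities above express each such move as a linear recurrence with explicit rational coefficients; iterating from the base case presents $F(u,v)$ as a product of these coefficients. The analogous identities obtained by evaluating the same intertwiners at $x = a\widetilde{u}$ instead of $a\widetilde{v}$ play the symmetric role for $F(v,u)$, and comparing both expressions ought to yield the desired symmetry.

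The main obstacle is that the $-w_0$ twist in $\widetilde{\cdot}$ produces an asymmetry $s_i \leftrightarrow s_{n-i}$ and $\natural \leftrightarrow \sharp$ between the $u$- and $v$-recurrences, and one must verify that the corresponding products of structure constants coincide despite this mismatch. An alternative and likely cleaner route is to first establish the composition case (Theorem B of the introduction) using the extra vanishing/interpolation characterization of $G_\alpha$ on compositions, and then bootstrap to arbitrary $v \in \mathbb{Z}^n$ via the factorization $K_v(x) = A_m(x;v) K_{v+(m^n)}(q^m x)$ of Definition \ref{DEFnegdegree}, combined with the identity $q^m a\widetilde{v} = a\widetilde{v - (m^n)}$ which follows from $w_0(m^n) = (m^n)$.
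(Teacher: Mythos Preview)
Your ingredients are exactly those of the paper's proof: the base case $F(0,v)=F(u,0)=1$, the Hecke and $\Phi$ bispectral identities derived from Proposition~\ref{PROPactionsame} (your $\Phi$-identity is literally the paper's \eqref{three1} after using $\widetilde{v}_n=t^{1-n}\overline{v}_1^{-1}$), and the $A_m$-bootstrap via $q^m\widetilde{v}=\widetilde{v-(m^n)}$ for the passage from $\mathcal{C}_n$ to $\mathbb{Z}^n$. So the strategy matches.

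The gap is in how you close the composition case. Your plan is to write each of $F(u,v)$ and $F(v,u)$ as a product of recurrence coefficients along a path from $(0,0)$ and then compare; as you yourself note, the $s_i\leftrightarrow s_{n-i}$ and $\natural\leftrightarrow\sharp$ mismatch makes this comparison nontrivial, and you do not carry it out. The paper sidesteps this obstacle entirely by reorganising the induction: rather than computing both sides, it proves that each generator move \emph{preserves the symmetry hypothesis}. Concretely, Step~1 shows that if $F(u,\cdot)=F(\cdot,u)$ holds for all second arguments, then the same holds with $u$ replaced by $s_iu$; this uses your $H_i$-relation together with its companion obtained by swapping $u\leftrightarrow v$ and $i\leftrightarrow n-i$, and a short case analysis on whether $s_{n-i}v=v$. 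Step~3 then inducts on $|\alpha|$ for $\alpha\in\mathcal{C}_n$: Step~1 lets one assume $\alpha_n>0$, so $\gamma:=\alpha^\sharp\in\mathcal{C}_n$ has smaller degree, and two applications of the $\Phi$-identity \eqref{three1} with the induction hypothesis in between finish the step. Your proposed alternative of using the extra vanishing/interpolation characterisation for the composition case is not what the paper does, and it is not clear how vanishing at the points $\overline{\beta}$ would directly yield the $a$-dependent identity $K_\alpha(a\widetilde{\beta})=K_\beta(a\widetilde{\alpha})$; you should instead implement the symmetry-preserving induction above.
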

%%%%%%%%%%%%%%%%%%%%%%%%%%
\begin{eg}
If $n=1$ and $m,r\in\mathbb{Z}_{\geq 0}$ then 
\begin{equation}\label{formn=1}
K_m(aq^{-r})=q^{-mr}\frac{(a^{-1};q)_{m+r}}{(a^{-1};q)_m(a^{-1};q)_r}
\end{equation}
by the explicit expression for $K_m(x)$ from Example \ref{n=1example}. The right hand side of
\eqref{formn=1} is manifestly
invariant under the interchange of $m$ and $r$.
\end{eg}
%%%%%%%%%%%%%%%%%%%%%%%%%%%
\begin{proof}
We divide the proof of the theorem in several steps.\\
{\bf Step 1.} If $K_u(a\widetilde{v})=K_v(a\widetilde{u})$ for all $v\in\mathbb{Z}^n$ then
$K_{s_iu}(a\widetilde{v})=K_v(a\widetilde{s_iu})$ for $v\in\mathbb{Z}^n$ and $1\leq i<n$.\\
{\bf Proof of step 1.} Writing out the formula from part 1 of Proposition \ref{PROPactionsame} gives
\begin{equation}\label{onex}
\begin{split}
\frac{(t-1)\widetilde{v}_i}{(\widetilde{v}_i-\widetilde{v}_{i+1})}K_u(a\widetilde{v})+
&\Bigl(\frac{\widetilde{v}_i-t\widetilde{v}_{i+1}}
{\widetilde{v}_i-\widetilde{v}_{i+1}}\Bigr)K_u(a\widetilde{s_{n-i}v})\\
=&\frac{(t-1)\overline{u}_i}{(\overline{u}_i-\overline{u}_{i+1})}K_u(a\widetilde{v})+
\Bigl(\frac{\overline{u}_i-t\overline{u}_{i+1}}{\overline{u}_i-\overline{u}_{i+1}}\Bigr)K_{s_iu}(a\widetilde{v}).
\end{split}
\end{equation}
Replacing in \eqref{onex} the role of $u$ and $v$ and replacing $i$ by $n-i$ we get
\begin{equation}\label{twox}
\begin{split}
\frac{(t-1)\widetilde{u}_{n-i}}{(\widetilde{u}_{n-i}-\widetilde{u}_{n+1-i})}K_v(a\widetilde{u})+
&\Bigl(\frac{\widetilde{u}_{n-i}-t\widetilde{u}_{n+1-i}}
{\widetilde{u}_{n-i}-\widetilde{u}_{n+1-i}}\Bigr)K_v(a\widetilde{s_{i}u})\\
=&\frac{(t-1)\overline{v}_{n-i}}{(\overline{v}_{n-i}-\overline{v}_{n+1-i})}K_v(a\widetilde{u})+
\Bigl(\frac{\overline{v}_{n-i}-t\overline{v}_{n+1-i}}{\overline{v}_{n-i}-\overline{v}_{n+1-i}}\Bigr)K_{s_{n-i}v}(a\widetilde{u}).
\end{split}
\end{equation}

Suppose that $s_{n-i}v=v$. Then $\overline{v}_{n-i}=t\overline{v}_{n+1-i}$ by the second part of Lemma \ref{LEMtech}. Since $\widetilde{v}=t^{1-n}w_0\overline{v}^{-1}$, i.e. $\widetilde{v}_i=t^{1-n}\overline{v}_{n+1-i}^{-1}$, we then also have $\widetilde{v}_i=t\widetilde{v}_{i+1}$.
It then follows by a direct computation that \eqref{onex} reduces to
$K_{s_iu}(a\widetilde{v})=K_u(a\widetilde{v})$ and \eqref{twox} to
$K_v(a\widetilde{s_iu})=K_v(a\widetilde{u})$ if $s_{n-i}v=v$. 

We now use these observations to prove step 1. Assume that $K_u(a\widetilde{v})=
K_v(a\widetilde{u})$ for all $v$. We have to show that 
$K_{s_iu}(a\widetilde{v})=K_v(a\widetilde{s_iu})$ for all $v$. It is trivially true if
$s_iu=u$, so we may assume that $s_iu\not=u$. 
Suppose that $v$ satisfies $s_{n-i}v=v$. Then it follows from the previous paragraph that
\[
K_{s_iu}(a\widetilde{v})=K_u(a\widetilde{v})=K_v(a\widetilde{u})=K_{v}(a\widetilde{s_iu}).
\]
If $s_{n-i}v\not=v$ then \eqref{onex} and the induction hypothesis can be used to write
$K_{s_iu}(a\widetilde{v})$ as an explicit linear combination of $K_v(a\widetilde{u})$ and 
$K_{s_{n-i}v}(a\widetilde{u})$. Then \eqref{twox} can be used to rewrite the term involving 
$K_{s_{n-i}v}(a\widetilde{u})$ as an explicit linear combination of $K_v(a\widetilde{u})$ and
$K_v(a\widetilde{s_iu})$. Hence we obtain an explicit expression of $K_{s_iu}(a\widetilde{v})$
as linear combination of $K_v(a\widetilde{u})$ and $K_v(a\widetilde{s_iu})$, which turns out to
reduce to
$K_{s_iu}(a\widetilde{v})=K_v(a\widetilde{s_iu})$ after a direct computation.\\
{\bf Step 2.} $K_0(a\widetilde{v})=1=K_v(a\widetilde{0})$ for all $v\in\mathbb{Z}^n$.\\
{\bf Proof of step 2.}
Clearly $K_0(x)=1$ and $K_v(a\widetilde{0})=K_v(a\tau)=1$ for $v\in\mathbb{Z}^n$ by Lemma \ref{LEMstep2}.\\
{\bf Step 3.} $K_\alpha(a\widetilde{v})=K_v(a\widetilde{\alpha})$ for $v\in \mathbb{Z}^n$ and 
$\alpha\in\mathcal{C}_n$.\\
{\bf Proof of step 3.} We prove it by induction. It is true for $\alpha=0$ by step 2. 
Let $m\in\mathbb{Z}_{>0}$ and suppose that $K_\gamma(a\widetilde{v})=
K_v(a\widetilde{\gamma})$
for $v\in\mathbb{Z}^n$ and $\gamma\in\mathcal{C}_n$ with $|\gamma|<m$. Let 
$\alpha\in\mathcal{C}_n$ with $|\alpha|=m$. 

We need to show
that $K_{\alpha}(a\widetilde{v})=K_v(a\widetilde{\alpha})$ for all $v\in\mathbb{Z}^n$.
By step 1 we may assume without loss of generality that $\alpha_n>0$. Then $\gamma:=\alpha^\sharp\in\mathcal{C}_n$ satisfies $|\gamma|=m-1$,
and $\alpha=\gamma^\natural$.
Furthermore, note that we have the formula
\begin{equation}\label{three1}
(a\overline{v}_1^{-1}-1)K_u(a\widetilde{v^\natural})=(a\overline{u}_1^{-1}-1)K_{u^\natural}(a\widetilde{v})
\end{equation}
for all $u,v\in\mathbb{Z}^n$, which follows by 
writing out the formula from part 3 of Lemma \ref{PROPactionsame}.
Hence we obtain
\begin{equation*}
\begin{split}
K_\alpha(a\widetilde{v})=K_{\gamma^\natural}(a\widetilde{v})&=\frac{(a\overline{v}_1^{-1}-1)}{(a\overline{\gamma}_1^{-1}-1)}K_\gamma(a\widetilde{v^\natural})\\
&=\frac{(a\overline{v}_1^{-1}-1)}{(a\overline{\gamma}_1^{-1}-1)}K_{v^\natural}(a\widetilde{\gamma})=K_v(a\widetilde{\gamma^\natural})=K_v(a\widetilde{\alpha}),
\end{split}
\end{equation*}
where we used the induction hypothesis for the third equality and \eqref{three1} for the second and fourth equality. This proves the induction step.\\
{\bf Step 4.} $K_u(a\widetilde{v})=K_v(a\widetilde{u})$ for all $u,v\in\mathbb{Z}^n$.\\
{\bf Proof of step 4.} Fix $u,v\in\mathbb{Z}^n$. Let $m\in\mathbb{Z}_{\geq 0}$ such that 
$u+(m^n)\in\mathcal{C}_n$. Note that $q^m\widetilde{v}=\widetilde{v-(m^n)}$
and $q^{-m}\widetilde{u}=\widetilde{u+(m^n)}$. Then
\begin{equation*}
\begin{split}
K_u(a\widetilde{v})&=A_m(a\widetilde{v};u)K_{u+(m^n)}(q^ma\widetilde{v})\\
&=A_m(a\widetilde{v};u)K_{u+(m^n)}\bigl(a(\widetilde{v-(m^n)})\bigr)\\
&=A_m(a\widetilde{v};u)K_{v-(m^n)}\bigl(a(\widetilde{u+(m^n)})\bigr)\\
&=A_m(a\widetilde{v};u)K_{v-(m^n)}(q^{-m}a\widetilde{u})=
A_m(a\widetilde{v};u)A_m(q^{-m}a\widetilde{u};v-(m^n))K_v(a\widetilde{u}),
\end{split}
\end{equation*}
where we used step 3 in the third equality. The result now follows from the fact that
\[
A_m(a\widetilde{v};u)A_m(q^{-m}a\widetilde{u};v-(m^n))=1,
\]
which follows by a straightforward computation using 
\eqref{tildeversusnormal}.
\end{proof}
%%%%%%%%%%%%%%%%%%%%%%%%%%%%%%%%
%%%%%%%%%%%%%%%%%%%%%%%%
\section{Some applications of duality}
%%%%%%%%%%%%%%%%%%%%%%%%%

%%%%%%%%%%%%%%%%%%%%%%%%%%%%%%%%%%%%%%%%%%%%%%
\subsection{Non-symmetric Macdonald polynomials}
%%%%%%%%%%%%%%%%%%%%%%%%%%%%%%%%%%%%%%%%%%%%%%

Recall that the (monic) non-symmetric Macdonald polynomial $E_\alpha(x)$ of degree $\alpha$
is the top homogeneous component of $G_\alpha(x)$, i.e.
\[
E_\alpha(x)=\lim_{a\rightarrow\infty} a^{-|\alpha|}G_\alpha(ax),\qquad \alpha\in\mathcal{C}_n.
\]
The normalized non-symmetric Macdonald polynomials are
\[
\overline{K}_\alpha(x):=\lim_{a\rightarrow\infty} K_\alpha(ax)=\frac{E_\alpha(x)}{E_\alpha(\tau)},\qquad \alpha\in\mathcal{C}_n.
\]
We write $\overline{K}\in\mathcal{F}^+_{\mathbb{F}[x]}$
for the resulting map $\alpha\mapsto \overline{K}_\alpha$.
Taking limits in Lemma \ref{LEMactiontransfer} we get
%%%%%%%%%%%%%%%%%%%%%%%%%%%%%%%%%%%%%%%%%%%%%%%
\begin{lem}\label{LEMtransferactionusual}
We have for $1\leq i< n$ and $1\leq j\leq n$,
\begin{enumerate}
\item[{\bf 1.}] $H_i\overline{K}=\widehat{H}_i\overline{K}$. 
\item[{\bf 2.}] $\xi_j\overline{K}=\widehat{x}_j^{-1}\overline{K}$.
\item[{\bf 3.}] $x_n\Delta\overline{K}=t^{1-n}\widehat{x}_1^{-1}\widehat{\Delta}^{-1}\overline{K}$.
\end{enumerate}
\end{lem}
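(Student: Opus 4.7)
My plan is to deduce each part of the lemma from the corresponding part of Lemma \ref{LEMactiontransfer} by substituting $x \mapsto ax$ and taking $a \to \infty$, using \eqref{limitK}: $K_\alpha(ax) \to \overline{K}_\alpha(x)$. In the limiting picture the hat-operator $\widehat{x}_j$ from \eqref{hataction} is to be read with its prefactor of $a$ absorbed by the rescaling, so that $(\widehat{x}_j f)(v) = \overline{v}_j f(v)$; the operators $\widehat{H}_i$ and $\widehat{\Delta}$ have no $a$-dependence and are unchanged.

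For part 1, the Demazure--Lusztig operator $H_i = t + \frac{x_i - tx_{i+1}}{x_i - x_{i+1}}(s_i - 1)$ acts via a rational coefficient of degree zero in $x$, hence commutes with the rescaling $x \mapsto ax$. Applying $x \mapsto ax$ to part 1 of Lemma \ref{LEMactiontransfer} and letting $a \to \infty$ yields part 1 here. For part 3, $\Phi = (x_n - t^{1-n})\Delta$; the substitution $x \mapsto ax$ in part 3 of Lemma \ref{LEMactiontransfer} produces
\[
(ax_n - t^{1-n})\bigl(\Delta [K_\alpha(a\cdot)]\bigr)(x) = t^{1-n}\bigl(a\overline{\alpha}_1^{-1} - 1\bigr) K_{\alpha^\natural}(ax).
\]
Dividing by $a$ and taking the limit gives $x_n \Delta \overline{K}_\alpha(x) = t^{1-n}\overline{\alpha}_1^{-1}\overline{K}_{\alpha^\natural}(x)$, which in the limiting convention for the hat-operators is exactly part 3.

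For part 2, the slickest route bypasses Lemma \ref{LEMactiontransfer} altogether: by the defining property of $E_\alpha$ we have $\xi_j^{-1} E_\alpha = \overline{\alpha}_j E_\alpha$, hence $\xi_j E_\alpha = \overline{\alpha}_j^{-1} E_\alpha$; dividing by the scalar $E_\alpha(\tau)$ gives $\xi_j \overline{K}_\alpha = \overline{\alpha}_j^{-1} \overline{K}_\alpha = (\widehat{x}_j^{-1}\overline{K})(\alpha)$. Alternatively one may decompose $\Xi_j = \Xi_j^{(-1)} + \Xi_j^{(0)}$ into its degree-$(-1)$ and degree-$0$ components in $x$ and take the $a\to\infty$ limit of part 2 of Lemma \ref{LEMactiontransfer} after $x \mapsto ax$; the degree-$(-1)$ part scales as $a^{-1}$ and drops out, and the surviving degree-$0$ part coincides with $\xi_j$ by the eigenvalue characterization on the basis $\{E_\alpha\}$. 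The main obstacle is purely notational: one has to fix the convention by which the $a$-dependence of the hat-operators is reabsorbed in the limit, after which all three identities reduce to routine scaling computations.
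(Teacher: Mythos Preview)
Your proof is correct and is precisely the paper's approach: the paper's entire argument is the one-line sentence ``Taking limits in Lemma \ref{LEMactiontransfer} we get'', and your rescaling computations make this explicit, including the necessary reinterpretation of $\widehat{x}_j$ without its $a$-prefactor in the limiting picture. Your direct route for part~2 via the eigenvalue equation $\xi_j E_\alpha=\overline{\alpha}_j^{-1}E_\alpha$ is even a shade more economical than carrying out the limit of the $\Xi_j$-identity.
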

%%%%%%%%%%%%%%%%%%%%%%%%%%%%%%%%%%%%%%%%%%%%%%
Note that
\[
(x_n\Delta)^nf(x)=\Bigl(\prod_{i=1}^nx_i\Bigr)f(q^{-1}x).
\]
Then repeated application of part 3 of Lemma \ref{LEMtransferactionusual} shows that for $\alpha\in\mathcal{C}_n$,
\begin{equation}\label{relweightshom}
\begin{split}
E_\alpha(x)&=\frac{E_{\alpha+(1^n)}(x)}{x_1\cdots x_n},\\
\overline{K}_\alpha(x)&=q^{|\alpha|}t^{(1-n)n}\Bigl(\prod_{i=1}^n(\overline{\alpha}_ix_i)^{-1}\Bigr)
\overline{K}_{\alpha+(1^n)}(x).
\end{split}
\end{equation}
As is well known and already noted in Section \ref{s2}, the first equality allows to relate the non-symmetric
Macdonald polynomials $E_v(x):=E_v(x;q,t)\in\mathbb{F}[x^{\pm 1}]$ for arbitrary $v\in\mathbb{Z}^n$ to those labeled by compositions through the formula
\[
E_v(x)=\frac{E_{v+(m^n)}(x)}{(x_1\cdots x_n)^m}.
\]
The second formula of \eqref{relweightshom} can now be used to explicitly define the normalized non-symmetric Macdonald polynomials for degrees $v\in\mathbb{Z}^n$.
%%%%%%%%%%%%%%%%%%%%%%%%%%%%%%%%%%
\begin{defi}
Let $v\in\mathbb{Z}^n$ and $m\in\mathbb{Z}_{\geq 0}$ such that $v+(m^n)\in\mathcal{C}_n$.
Then $\overline{K}_v(x):=\overline{K}_v(x;q,t)\in\mathbb{F}[x^{\pm 1}]$ is defined by
\begin{equation*}
\overline{K}_v(x):=q^{m|v|}t^{(1-n)nm}\Bigl(\prod_{i=1}^n(\overline{v}_ix_i)^{-m}\Bigr)
\overline{K}_{v+(m^n)}(x).
\end{equation*}
\end{defi}
%%%%%%%%%%%%%%%%%%%%%%%%%%%%%%%%%
Using
\[
\lim_{a\rightarrow\infty}A_m(ax;v)=q^{-m^2n}t^{(1-n)nm}\prod_{i=1}^n(\overline{v}_ix_i)^{-m}
\]
and the definitions of $G_v(x)$ and $K_v(x)$ it follows that
\begin{equation*}
\begin{split}
\lim_{a\rightarrow\infty}a^{-|v|}G_v(ax)&=E_v(x),\\
\lim_{a\rightarrow\infty}K_v(ax)&=\overline{K}_v(x)
\end{split}
\end{equation*}
for all $v\in\mathbb{Z}^n$, so in particular
\[
\overline{K}_v(x)=\frac{E_v(x)}{E_v(\tau)}\qquad \forall\, v\in\mathbb{Z}^n.
\]
Lemma \ref{LEMtransferactionusual} holds true for the extension of $\overline{K}$
to the map $\overline{K}\in\mathcal{F}_{\mathbb{F}[x^{\pm 1}]}$
defined by $v\mapsto \overline{K}_v$ ($v\in\mathbb{Z}^n$).
Taking the limit in Theorem \ref{THMsymmetry} we obtain the well known duality \cite{ChPap}
of the Laurent polynomial versions
of the normalized non-symmetric Macdonald polynomials.
%%%%%%%%%%%%%%%%%%%%%%%%%%%%%%%%%%%%%%%%%
\begin{cor}
For all $u,v\in\mathbb{Z}^n$,
\[
\overline{K}_u(\widetilde{v})=\overline{K}_v(\widetilde{u}).
\]
\end{cor}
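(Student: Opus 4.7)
The plan is to obtain the corollary as a straightforward $a\to\infty$ limit of the duality theorem for the normalized interpolation polynomials (Theorem \ref{THMsymmetry}). Everything required is already in place: Theorem \ref{THMsymmetry} yields the identity $K_u(a\widetilde{v})=K_v(a\widetilde{u})$ in $\mathbb{K}$, and the displayed formula $\lim_{a\to\infty}K_v(ax)=\overline{K}_v(x)$ in $\mathbb{F}[x^{\pm 1}]$ (just proved for all $v\in\mathbb{Z}^n$) provides the degeneration to the Macdonald side.

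The key observation is that the evaluation point $\widetilde{v}=\overline{-w_0v}\in\mathbb{F}^n$ is independent of the parameter $a$. Hence, for fixed $u,v\in\mathbb{Z}^n$, the identity
\[
K_u(ax)\big|_{x=\widetilde{v}}=K_u(a\widetilde{v})
\]
holds in $\mathbb{K}$, and similarly for $K_v(a\widetilde{u})$. Taking $a\to\infty$ in both sides of Theorem \ref{THMsymmetry} and invoking the limit formula at $x=\widetilde{v}$ and $x=\widetilde{u}$ gives
\[
\overline{K}_u(\widetilde{v})=\lim_{a\to\infty}K_u(a\widetilde{v})=\lim_{a\to\infty}K_v(a\widetilde{u})=\overline{K}_v(\widetilde{u}),
\]
which is exactly the claim.

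The only point that deserves any attention at all is that the limit commutes with substitution of the $a$-independent vector $\widetilde{v}$ for $x$; this is automatic since $K_v(ax;q,t;a)$ is a rational function in $a$ with coefficients in $\mathbb{F}[x^{\pm 1}]$ whose limit as $a\to\infty$ is, coefficient by coefficient in $x$, the Laurent polynomial $\overline{K}_v(x)$. I do not anticipate any real obstacle; this proof is genuinely one line, and the work has all been done in setting up Theorem \ref{THMsymmetry} together with the compatible extension of $\overline{K}$ to $\mathbb{Z}^n$ via the definition immediately preceding the corollary.
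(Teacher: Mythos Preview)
Your proposal is correct and matches the paper's approach exactly: the paper states the corollary immediately after the sentence ``Taking the limit in Theorem \ref{THMsymmetry} we obtain the well known duality \ldots'' and offers no further proof. Your write-up simply spells out this limit argument, using the formula $\lim_{a\to\infty}K_v(ax)=\overline{K}_v(x)$ established just before the corollary together with the $a$-independence of $\widetilde{v}$. One small inaccuracy: for $v\notin\mathcal{C}_n$ the expression $K_v(ax)$ is a rational (not Laurent-polynomial) function of $x$ because of the $A_m$ factor, so ``coefficients in $\mathbb{F}[x^{\pm1}]$'' is not quite right; but this does not affect the argument, since substitution of the $a$-independent point $\widetilde{v}$ is still well defined and commutes with the limit.
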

%%%%%%%%%%%%%%%%%%%%%%%%%%%%%%%%%%%%%%%%%%
\subsection{$O$-polynomials}
%%%%%%%%%%%%%%%%%%%%%%%%%%%%%%%%%%%%%%%%%%
We now show that the duality of the non-symmetric interpolation Macdonald polynomials (Theorem \ref{THMsymmetry}) directly implies the existence of the $O$-polynomials $O_\alpha$ (which is the nontrivial part of the proof of \cite[Thm. 1.2]{S-bin}),
and that it provides an explicit expression for $O_\alpha$ in terms of the non-symmetric interpolation Macdonald polynomial $K_\alpha$.

%%%%%%%%%%%%%%%%%%%%%%%%%%%%%%%%%%%%%%%%
\begin{prop}\label{propO}
For all $\alpha\in\mathcal{C}_n$ we have
\[
O_\alpha(x)=K_\alpha(t^{1-n}aw_0x).
\]
\end{prop}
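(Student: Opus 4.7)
The proposition should follow almost immediately from the duality theorem (Theorem \ref{THMsymmetry}) once we translate the defining property of $O_\alpha$ through the relation between $\widetilde{\beta}$ and $\overline{\beta}^{-1}$. My plan is as follows.

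First, I would recall that $O_\alpha(x)$ is characterized by being a polynomial of degree at most $|\alpha|$ with coefficients in $\mathbb{K}$ satisfying
\[
O_\alpha(\overline{\beta}^{-1}) = \frac{G_\beta(a\widetilde{\alpha})}{G_\beta(a\tau)} = K_\beta(a\widetilde{\alpha}) \qquad \text{for all } \beta \in \mathcal{C}_n.
\]
(One must note here that such vanishing/interpolation data indeed determines a polynomial uniquely, but this is exactly the existence/uniqueness statement underlying the definition of $O_\alpha$ from \cite{S-bin}, so I may invoke it.)

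Next, I would define the candidate $P(x) := K_\alpha(t^{1-n} a w_0 x)$. This is manifestly a polynomial in $x$ of degree at most $|\alpha|$ with coefficients in $\mathbb{K}$, because $K_\alpha(x) = G_\alpha(x)/G_\alpha(a\tau)$ lies in $\mathbb{K}[x]$ for $\alpha \in \mathcal{C}_n$, and $x \mapsto t^{1-n} a w_0 x$ is a linear rescaling/permutation of variables. The goal is then to verify that $P$ satisfies the same interpolation conditions as $O_\alpha$ and conclude by uniqueness.

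The verification proceeds by combining two ingredients. From \eqref{tildeversusnormal} we have $t^{n-1} w_0 \widetilde{\beta} = \overline{\beta}^{-1}$, so $a \widetilde{\beta} = t^{1-n} a w_0 \overline{\beta}^{-1}$. Therefore
\[
P(\overline{\beta}^{-1}) = K_\alpha(t^{1-n} a w_0 \overline{\beta}^{-1}) = K_\alpha(a \widetilde{\beta}).
\]
Applying the duality (Theorem \ref{THMsymmetry}) with $u = \alpha$ and $v = \beta$ gives $K_\alpha(a\widetilde{\beta}) = K_\beta(a\widetilde{\alpha})$, which is precisely the interpolation value required of $O_\alpha$ at $\overline{\beta}^{-1}$. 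By uniqueness this forces $P(x) = O_\alpha(x)$.

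There is essentially no obstacle: the entire content lies in the duality theorem, which has already been proven, together with the bookkeeping identity $\widetilde{\beta} = t^{1-n} w_0 \overline{\beta}^{-1}$. The only small thing to double-check is that the interpolation data $\{\overline{\beta}^{-1} : \beta \in \mathcal{C}_n\}$ together with the degree bound $|\alpha|$ genuinely pins down the polynomial $O_\alpha$ uniquely; this is exactly the defining property recalled above.
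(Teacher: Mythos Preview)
Your proof is correct and follows essentially the same route as the paper: define the candidate $K_\alpha(t^{1-n}aw_0x)$, use \eqref{tildeversusnormal} to identify its value at $\overline{\beta}^{-1}$ with $K_\alpha(a\widetilde{\beta})$, apply the duality Theorem~\ref{THMsymmetry}, and conclude by the uniqueness in the defining property of $O_\alpha$.
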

\begin{proof}
The polynomial $\widetilde{O}_\alpha(x):=K_\alpha(t^{1-n}aw_0x)$ is of degree at most $|\alpha|$ and 
\[
\widetilde{O}_\alpha(\overline{\beta}^{-1})=K_\alpha(t^{1-n}aw_0\overline{\beta}^{-1})=K_\alpha(a\widetilde{\beta})=
K_\beta(a\widetilde{\alpha})
\]
for all $\beta\in\mathcal{C}_n$ by \eqref{tildeversusnormal} and Theorem \ref{THMsymmetry}. Hence $\widetilde{O}_\alpha=O_\alpha$.
\end{proof}
%%%%%%%%%%%%%%%%%%%%%%

%%%%%%%%%%%%%%%%%%%%%%%%%
\subsection{Okounkov's duality}
%%%%%%%%%%%%%%%%%%%%%%%%%%
Write $F[x]^{S_n}$ for the symmetric polynomials in $x_1,\ldots,x_n$
with coefficients in a field $F$. Write $C_+:=\sum_{w\in S_n}H_w$. The symmetric interpolation Macdonald polynomial $R_\lambda(x)\in\mathbb{F}[x]^{S_n}$ is the multiple of $C_+G_\lambda$ such that the coefficient of $x^\lambda$ is one
(see, e.g., \cite{S-int}).
We write
\[
K_\lambda^+(x):=\frac{R_\lambda(x)}{R_\lambda(a\tau)}\in\mathbb{K}[x]^{S_n}
\]
for the normalized symmetric interpolation Macdonald polynomial. Then 
\begin{equation}\label{relsymm}
C_+K_\alpha(x)=
\Bigl(\sum_{w\in S_n}t^{\ell(w)}\Bigr)K_{\alpha_+}^+(x)
\end{equation}
for $\alpha\in\mathcal{C}_n$. Okounkov's \cite[\S 2]{Ok} duality result now reads as follows.
%%%%%%%%%%%%%%%%%%%%%%%%%%%%%%%%%%%%%%%%%
\begin{thm}\label{Okounkovthm}
For partitions $\lambda, \mu\in\mathcal{P}_n$ we have
\[
K_\lambda^+(a\overline{\mu}^{-1})=K_\mu^+(a\overline{\lambda}^{-1}).
\]
\end{thm}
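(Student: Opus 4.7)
The plan is to derive Okounkov's duality from the non-symmetric duality $K_u(a\widetilde{v})=K_v(a\widetilde{u})$ of Theorem \ref{THMsymmetry} by applying the discrete Hecke symmetrizer $\widehat{C}_+:=\sum_{w\in S_n}\widehat{H}_w$ separately in each of the two variables $u,v$, and then comparing the two orderings using that they commute. Write $g(u,v):=K_u(a\widetilde{v})=K_v(a\widetilde{u})$ and $W(t):=\sum_{w\in S_n}t^{\ell(w)}$.

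Two ingredients are needed. The first is a ``transport'' identity: iterating Proposition \ref{PROPactionsame}(1) and invoking \eqref{relsymm} gives $\widehat{C}_+^{(v)}K_v(y)=C_+^{(x)}K_v(x)|_{x=y}=W(t)K_{v_+}^+(y)$ in $\mathbb{K}(y)$. Specialising to $g$ (after rewriting it as $K_v(a\widetilde{u})$ by duality) produces, at any partition $\mu$, the identity $(\widehat{C}_+^{(v)}g)(u,\mu)=W(t)K_\mu^+(a\widetilde{u})$ as a function of $u$. The second ingredient is that for any partition $\lambda$ the function $u\mapsto K_\lambda^+(a\widetilde{u})$ is Hecke-constant, i.e.\ $\widehat{H}_i^{(u)}K_\lambda^+(a\widetilde{u})=tK_\lambda^+(a\widetilde{u})$ for every $u\in\mathbb{Z}^n$. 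Indeed, when $u_i=u_{i+1}$, the coefficient $\overline{u}_i-t\overline{u}_{i+1}$ in \eqref{hataction} vanishes by Lemma \ref{LEMtech}(2); when $u_i\neq u_{i+1}$, the relation $w_0s_i=s_{n-i}w_0$ combined with Lemma \ref{LEMtech}(1) applied to $-w_0u$ (whose entries at positions $n-i,n+1-i$ are $-u_{i+1},-u_i$, hence distinct) gives $\widetilde{s_iu}=s_{n-i}\widetilde{u}$, so that $K_\lambda^+(a\widetilde{s_iu})=K_\lambda^+(as_{n-i}\widetilde{u})=K_\lambda^+(a\widetilde{u})$ by the $S_n$-symmetry of $K_\lambda^+$. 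Summing over $w\in S_n$ yields $\widehat{C}_+^{(u)}K_\lambda^+(a\widetilde{u})=W(t)K_\lambda^+(a\widetilde{u})$.

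Assembling the two ingredients, applying $\widehat{C}_+^{(u)}\widehat{C}_+^{(v)}$ to $g$ and specialising at $(u,v)=(\lambda,\mu)$ gives $W(t)^2K_\mu^+(a\widetilde{\lambda})$: first the ``transport'' identity produces $W(t)K_\mu^+(a\widetilde{u})$, then the Hecke-constancy at $u=\lambda$ multiplies by another factor $W(t)$. The completely parallel argument with $u$ and $v$ interchanged yields $W(t)^2K_\lambda^+(a\widetilde{\mu})$ for the opposite ordering $\widehat{C}_+^{(v)}\widehat{C}_+^{(u)}g$. Since $\widehat{C}_+^{(u)}$ and $\widehat{C}_+^{(v)}$ act on disjoint variables and thus commute as operators, the two expressions agree, giving $K_\lambda^+(a\widetilde{\mu})=K_\mu^+(a\widetilde{\lambda})$ after cancelling $W(t)^2\neq 0$; this is Okounkov's duality, and the stated identity follows via the $S_n$-symmetry of $K^+$ together with \eqref{tildeversusnormal}. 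The main obstacle is the Hecke-constancy claim: one must recognise that $\widehat{H}_i^{(u)}$ acts through $s_iu$, whereas symmetry of $K_\lambda^+$ in its argument only applies after transporting $s_i$ through the twist $u\mapsto\widetilde{u}$ to $s_{n-i}$, which is precisely what Lemma \ref{LEMtech}(1)--(2) delivers.
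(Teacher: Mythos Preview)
Your proof is correct and follows a somewhat different route from the paper's. Both arguments start from the non-symmetric duality (Theorem \ref{THMsymmetry}) and use the transport identity $\widehat{C}_+K=C_+K$ from Proposition \ref{PROPactionsame}(1) together with \eqref{relsymm}. The paper, however, applies only a \emph{single} symmetrizer $\widehat{C}_+$ in the $u$-variable to $f_\mu(u)=K_u(a\widetilde{\mu})=K_\mu(a\widetilde{u})$; to evaluate the second expression it rewrites $K_\mu(a\widetilde{u})$ via \eqref{tildeversusnormal} as $(Jw_0K_\mu(t^{1-n}x))|_{x=a^{-1}\overline{u}}$, then invokes Remark \ref{HatRelation} to transport $\widehat{C}_+$ to $C_+$ in the $x$-variable, and finally uses the operator identity $Jw_0C_+=C_+Jw_0$ derived from \eqref{Iw0}. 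Your approach instead symmetrizes in \emph{both} index variables and exploits their commutativity: after applying $\widehat{C}_+^{(v)}$ you land on the symmetric polynomial $K_\mu^+(a\widetilde{u})$, and then you observe directly that this function of $u$ is a $\widehat{H}_i^{(u)}$-eigenfunction with eigenvalue $t$, because $\widetilde{s_iu}$ is merely a coordinate permutation of $\widetilde{u}$ (Lemma \ref{LEMtech}). This Hecke-constancy step replaces the paper's use of $J$, $w_0$, Remark \ref{HatRelation}, and the intertwining \eqref{Iw0}, making your argument a bit more self-contained; the trade-off is the extra factor of $W(t)$ and the double-symmetrizer bookkeeping.
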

%%%%%%%%%%%%%%%%%%%%%%%%%%%%%%%%%%%%%%%%
Let us derive Theorem \ref{Okounkovthm} as consequence of Theorem \ref{THMsymmetry}.
Write $\widehat{C}_+=\sum_{w\in S_n}\widehat{H}_w$, with
$\widehat{H}_w:=\widehat{H}_{i_1}\cdots \widehat{H}_{i_r}$ for a reduced expression $w=s_{i_1}\cdots s_{i_r}$. Write $f_\mu\in\mathcal{F}_\mathbb{K}$ for the function $f_\mu(u):=K_u(a\widetilde{\mu})$ ($u\in\mathbb{Z}^n$). Then
\begin{equation}\label{tussenstap}
\Bigl(\sum_{w\in S_n}t^{\ell(w)}\Bigr)K_{\lambda}^+(a\widetilde{\mu})=(C_+K_\lambda)(a\widetilde{\mu})=(\widehat{C}_+f_\mu)(\lambda)
\end{equation}
by part 1 of Proposition \ref{PROPactionsame}. The duality \eqref{Fsymmetry} of $K_u$ and
\eqref{tildeversusnormal}
imply that
\begin{equation}\label{tussenstap2}
f_\mu(u)=K_\mu(a\widetilde{u})=\bigl(Jw_0K_\mu(t^{1-n}x)\bigr)|_{x=a^{-1}\overline{u}}
\end{equation}
with $(Jf)(x):=f(x_1^{-1},\ldots,x_n^{-1})$ for $f\in\mathbb{K}(x)$. A direct computation shows
that
\begin{equation}\label{Iw0}
JH_iJ=(H_i^\circ)^{-1},\qquad w_0H_iw_0=(H_{n-i}^\circ)^{-1}
\end{equation}
for $1\leq i<n$. In particular, $Jw_0C_+=C_+Jw_0$. Combined with Remark \ref{HatRelation}
we conclude that
\[
(\widehat{C}_+f_\mu)(\lambda)=\bigl(Jw_0C_+K_\mu(t^{1-n}x)\bigr)|_{x=a^{-1}\overline{\lambda}}.
\]
By \eqref{relsymm} and \eqref{tildeversusnormal}
this simplifies to
\[
(\widehat{C}_+f_\mu)(\lambda)=\Bigl(\sum_{w\in S_n}t^{\ell(w)}\Bigr)K_{\mu}^+(a\widetilde{\lambda}).
\]
Returning to \eqref{tussenstap} we conclude that $K_\lambda^+(a\widetilde{\mu})=
K_\mu^+(a\widetilde{\lambda})$.
Since $K_\lambda^+$ is symmetric 
we obtain from \eqref{tildeversusnormal} that
\[
K_\lambda^+(a\overline{\mu}^{-1})=K_\mu^+(a\overline{\lambda}^{-1}),
\]
which is Okounkov's duality result.
%%%%%%%%%%%%%%%%%%%%%%%%%%%%%%%%%

\subsection{A primed version of duality}

We first derive the following twisted version of the duality of the non-symmetric interpolation
Macdonald polynomials (Theorem \ref{THMsymmetry}).

%%%%%%%%%%%%%%%%%%%%%%%%%%%%%%%%%%
\begin{lem}\label{CORw0symmetry}
For $u,v\in\mathbb{Z}^n$ we have
\begin{equation}\label{w0twistedsymmetry}
\bigl(H_{w_0} K_u\bigr)(a\widetilde{v})=\bigl(H_{w_0} K_v\bigr)(a\widetilde{u}).
\end{equation}
\end{lem}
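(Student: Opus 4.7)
The plan is to adapt the derivation of Okounkov's duality (Theorem \ref{Okounkovthm}) from Theorem \ref{THMsymmetry} by replacing the symmetrizer $C_+$ with the single Hecke element $H_{w_0}$. Three ingredients drive the argument: the transfer identity $H_iK=\widehat{H}_iK$ of Proposition \ref{PROPactionsame}(1), the scalar duality $K_u(a\widetilde{v})=K_v(a\widetilde{u})$ of Theorem \ref{THMsymmetry}, and the intertwining relations \eqref{Iw0}.

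First, since the continuous operators $H_j$ (acting on the $x$-variable) commute pointwise with the discrete operators $\widehat{H}_i$ (acting on the index $u$ of $K$), and both families satisfy the braid relations, iterating Proposition \ref{PROPactionsame}(1) yields $H_{w_0}K=\widehat{H}_{w_0}K$ as an identity in $\mathcal{F}_{\mathbb{K}(x)}$ (the equation $H_wK=\widehat{H}_{w^{-1}}K$ in general, which collapses since $w_0^{-1}=w_0$). Evaluating at $x=a\widetilde{v}$ transforms the left-hand side of \eqref{w0twistedsymmetry} into
\[
(H_{w_0}K_u)(a\widetilde{v})=(\widehat{H}_{w_0}f_v)(u),\qquad f_v(u):=K_u(a\widetilde{v}).
\]

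Next, Theorem \ref{THMsymmetry} and the formula $\widetilde{u}=t^{1-n}w_0\overline{u}^{-1}$ from \eqref{tildeversusnormal} allow me to rewrite $f_v(u)=K_v(a\widetilde{u})=(Jw_0P)(a\overline{u})$, where $P(y):=K_v(a^2t^{1-n}y)$ and $(Jf)(x):=f(x^{-1})$. An iterated form of Remark \ref{HatRelation}, justified by the Hecke relations for the $\widehat{H}_i$, then converts the discrete operator back into a continuous one:
\[
(\widehat{H}_{w_0}f_v)(u)=\bigl(H_{w_0}(Jw_0P)\bigr)(a\overline{u}).
\]

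The key algebraic step is the commutation $Jw_0\cdot H_{w_0}=H_{w_0}\cdot Jw_0$ on $\mathbb{F}(x)$. From \eqref{Iw0}, namely $JH_iJ=(H_i^\circ)^{-1}$ and $w_0H_iw_0=(H_{n-i}^\circ)^{-1}$, a direct calculation gives $(Jw_0)H_i(Jw_0)^{-1}=H_{n-i}$ for every $1\le i<n$. Since $w_0s_iw_0=s_{n-i}$ is a length-preserving involution of simple reflections, a reduced expression $s_{i_1}\cdots s_{i_\ell}$ for $w_0$ transforms into another reduced expression $s_{n-i_1}\cdots s_{n-i_\ell}$ for $w_0$, and the braid relations then force $(Jw_0)H_{w_0}(Jw_0)^{-1}=H_{w_0}$. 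Using this commutation, the fact that $H_{w_0}$ is invariant under scalar dilation of its argument, and the calculation $a^2t^{1-n}Jw_0(a\overline{u})=a\widetilde{u}$ (a second use of \eqref{tildeversusnormal}), I chain
\[
\bigl(H_{w_0}(Jw_0P)\bigr)(a\overline{u})=\bigl(Jw_0(H_{w_0}P)\bigr)(a\overline{u})=(H_{w_0}K_v)(a\widetilde{u}),
\]
which combined with the previous identities proves the lemma. The only non-formal input is the commutation $Jw_0H_{w_0}=H_{w_0}Jw_0$; the main obstacle is simply keeping the three operator actions---continuous $H_i$ on $x$, discrete $\widehat{H}_i$ on $u$, and twist $Jw_0$---properly threaded, and once this commutation is isolated the rest is bookkeeping.
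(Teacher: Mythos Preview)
Your proof is correct and follows essentially the same route as the paper's: convert $H_{w_0}$ to $\widehat{H}_{w_0}$ via Proposition \ref{PROPactionsame}(1), rewrite $f_v$ using duality and \eqref{tildeversusnormal} as $Jw_0$ applied to a rescaled $K_v$, push $\widehat{H}_{w_0}$ back to $H_{w_0}$ via Remark \ref{HatRelation}, and finish with the commutation $H_{w_0}Jw_0=Jw_0H_{w_0}$ from \eqref{Iw0}. Your version is slightly more explicit in two places the paper leaves implicit---the observation that iterating $H_iK=\widehat{H}_iK$ gives $H_wK=\widehat{H}_{w^{-1}}K$ (harmless here since $w_0=w_0^{-1}$), and the derivation $(Jw_0)H_i(Jw_0)^{-1}=H_{n-i}$ from \eqref{Iw0}---and your choice to absorb the scaling into $P(y)=K_v(a^2t^{1-n}y)$ so that the evaluation point is literally $a\overline{u}$ matches Remark \ref{HatRelation} on the nose, whereas the paper evaluates at $a^{-1}t^{n-1}\overline{u}$ and silently uses that $H_i$ commutes with dilation.
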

%%%%%%%%%%%%%%%%%%%%%%%%%%%%%%%%%%%
\begin{proof}
We proceed as in the previous subsection. Set $f_v(u):=K_u(a\widetilde{v})$ for $u,v\in\mathbb{Z}^n$.
By part 1 of Proposition \ref{PROPactionsame},
\[
\bigl(H_{w_0}K_u\bigr)(a\widetilde{v})=\bigl(\widehat{H}_{w_0}f_v\bigr)(u).
\]
Since $f_v(u)=\bigl(Iw_0K_v\bigr)(a^{-1}t^{n-1}\overline{u})$
by \eqref{tildeversusnormal}, Remark \ref{HatRelation} implies that
\[
\bigl(\widehat{H}_{w_0}f_v\bigr)(u)=
\bigl(H_{w_0}Jw_0K_v\bigr)(a^{-1}t^{n-1}\overline{u}).
\]
Now $H_{w_0}Jw_0=Jw_0H_{w_0}$ by \eqref{Iw0}, hence
\[
\bigl(\widehat{H}_{w_0}f_v\bigr)(u)=
\bigl(Jw_0H_{w_0}K_v\bigr)(a^{-1}t^{n-1}\overline{u})=
(H_{w_0}K_v)(a\widetilde{u}),
\]
which completes the proof.
\end{proof}
%%%%%%%%%%%%%%%%%%%%%%%%%%%%%%%%%%%%%%

%%%%%%%%%%%%%%%%%%%%%%%%%%%%%%%%%%%%%%%%%%%%%%
Recall from Theorem \ref{THMprime} that
\[
G_\beta^\prime(x)=t^{(1-n)|\beta|+I(\beta)}\Psi G_\beta^\circ(t^{n-1}x)
\]
with $\Psi:=w_0H_{w_0}^\circ$. We define normalized versions by
\[
K_\beta^\prime(x):=\frac{G_\beta^\prime(x)}{G_\beta^\prime(a^{-1}\tau)}=t^{\ell(w_0)}\Psi K_\beta^\circ(t^{n-1}x),\qquad \beta\in\mathcal{C}_n,
\]
with $K_v^\circ:=\iota(K_v)$ for $v\in\mathbb{Z}^n$ (the second formula follows from
Lemma \ref{CORevalprime}).
More generally, we define for $v\in\mathbb{Z}^n$,
\begin{equation}\label{defprime}
K_v^\prime(x):=t^{\ell(w_0)}\Psi K_v^\circ(t^{n-1}x).
\end{equation}
We write $K^\prime: \mathbb{Z}^n\rightarrow\mathbb{K}(x)$ for the map
$v\mapsto K_v^\prime$ ($v\in\mathbb{Z}^n$).  Since $H_i\Psi=\Psi H_i^\circ$, part 1 of Proposition \ref{PROPactionsame} gives 
$H_iK^\prime=\widehat{H}_i^\circ K^\prime$. Considering the action of $((x_n-1)\Delta^\circ)^n$ on $K_\beta^\prime(x)$ we get, using the fact that
$((x_n-1)\Delta^\circ)^n$ commutes with $\Psi$ and part 3 of Proposition \ref{PROPactionsame},
\[
K_v^\prime(x)=\Bigl(\prod_{i=1}^n\frac{(1-a^{-1}\overline{v}_i)}{(1-q^{-1}x_i)}\Bigr)K_{v+(1^n)}^\prime(q^{-1}x),
\]
in particular
\[
K_v^\prime(x)=\Bigl(\prod_{i=1}^n\frac{\bigl(a^{-1}\overline{v}_i;q\bigr)_m}{\bigl(q^{-m}x_i;q\bigr)_m}\Bigr)K_{v+(m^n)}^\prime(q^{-m}x).
\]
%%%%%%%%%%%%%%%%%%%%%%%%%%%%%%%%%%%%%%%%%%%%
\begin{eg}
For $n=1$ we have $K_v^\prime(x)=K_v^\circ(x)$
for $v\in\mathbb{Z}$, hence
\begin{equation*}
\begin{split}
K_{-m}^\prime(x)&=\frac{\bigl(q^{-1}a^{-1};q^{-1}\bigr)_m}{\bigl(q^{-1}x;q^{-1}\bigr)_m}=(ax)^{-m}\frac{\bigl(qa;q\bigr)_m}{\bigl(qx^{-1};q\bigr)_m},\\
K_{m}^\prime(x)&=(ax)^m\frac{\bigl(x^{-1};q^{-1}\bigr)_m}{\bigl(a;q^{-1}\bigr)_m}=\frac{\bigl(x;q\bigr)_m}{\bigl(a^{-1};q\bigr)_m}
\end{split}
\end{equation*}
for $m\in\mathbb{Z}_{\geq 0}$ by Example \ref{n=1example}.
\end{eg}
%%%%%%%%%%%%%%%%%%%%%%%%%%%%%%%%%%%%%%%%%%%%%

%%%%%%%%%%%%%%%%%%%%%%%%%%%%%%%%%%%%%%%%%%%%%
\begin{prop}
For all $u,v\in\mathbb{Z}^n$ we have
\[
K_v^\prime(a^{-1}\overline{u})=K_u^\prime(a^{-1}\overline{v}).
\]
\end{prop}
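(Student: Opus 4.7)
The plan is to deduce this primed duality from the twisted duality for $K_v$'s proved in Lemma \ref{CORw0symmetry}, by applying the field automorphism $\iota$ to that identity and then re-interpreting the result via the definition \eqref{defprime} of $K_v^\prime$.

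The first step is to rewrite $K_v^\prime$ in evaluation-friendly form. Since $\Psi=w_0H_{w_0}^\circ$ and since a symmetric group element $w$ acts on functions by $(wf)(y)=f(wy)$, the definition \eqref{defprime} becomes
\[
K_v^\prime(x)=t^{\ell(w_0)}\bigl(H_{w_0}^\circ K_v^\circ\bigr)(t^{n-1}w_0x).
\]
So the identity I have to prove becomes
\[
\bigl(H_{w_0}^\circ K_u^\circ\bigr)(a^{-1}t^{n-1}w_0\overline{v})=\bigl(H_{w_0}^\circ K_v^\circ\bigr)(a^{-1}t^{n-1}w_0\overline{u}).
\]

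The second step is to obtain exactly this identity by applying $\iota$ to both sides of Lemma \ref{CORw0symmetry}. Since the coefficients of $H_i$ involve only $t$, we have $\iota\circ H_i=H_i^\circ\circ\iota$, hence $\iota\bigl((H_{w_0}K_u)(a\widetilde{v})\bigr)=\bigl(H_{w_0}^\circ K_u^\circ\bigr)\bigl(\iota(a\widetilde{v})\bigr)=\bigl(H_{w_0}^\circ K_u^\circ\bigr)\bigl(a^{-1}\iota(\widetilde{v})\bigr)$. It therefore suffices to identify $\iota(\widetilde{v})$.

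The key small computation is $\iota(\widetilde{v})=t^{n-1}w_0\overline{v}$. Indeed, from $\overline{w}_i=q^{w_i}t^{-k_i(w)}$ one has $\iota(\overline{w})=\overline{w}^{-1}$ for every $w\in\mathbb{Z}^n$, so $\iota(\widetilde{v})=\iota\bigl(\overline{-w_0v}\bigr)=\overline{-w_0v}^{-1}$; then applying \eqref{tildeversusnormal} to $w:=-w_0v$ (so that $\widetilde{w}=\overline{v}$) gives $\overline{-w_0v}^{-1}=t^{n-1}w_0\overline{v}$, as claimed. Substituting this back into the $\iota$-transformed version of Lemma \ref{CORw0symmetry} yields precisely the displayed identity at the end of paragraph one, and multiplying through by $t^{\ell(w_0)}$ finishes the proof.

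No serious obstacle is expected: the argument is a direct transport of Lemma \ref{CORw0symmetry} along $\iota$, and the only bookkeeping is the interplay between $\iota$, the tilde-operation, and the left multiplication by $w_0$ hidden inside $\Psi$. The one place where a slip is easy is the direction in which $w_0$ acts on evaluation points when $\Psi$ is unpacked; the formula $(wf)(y)=f(wy)$, together with the identification of $\iota(\widetilde{v})$ above, makes the two occurrences of $t^{n-1}w_0$ match cleanly on both sides.
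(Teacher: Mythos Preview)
Your proof is correct and follows essentially the same route as the paper: unpack $K_v^\prime$ via \eqref{defprime} and $\Psi=w_0H_{w_0}^\circ$, identify the evaluation point using \eqref{tildeversusnormal}, and reduce to the twisted duality of Lemma \ref{CORw0symmetry}. The paper is terser---it writes $K_v^\prime(a^{-1}\overline{u})=t^{\ell(w_0)}(H_{w_0}^\circ K_v^\circ)(a^{-1}\widetilde{u}^{-1})$ directly and then invokes \eqref{w0twistedsymmetry}, leaving the passage through $\iota$ implicit---whereas you spell out the action of $\iota$ and the identity $\iota(\widetilde{v})=\widetilde{v}^{-1}=t^{n-1}w_0\overline{v}$; but these are the same argument. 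One cosmetic slip: your formula $(wf)(y)=f(wy)$ should in general read $(wf)(y)=f(w^{-1}y)$, though since $w_0=w_0^{-1}$ this does not affect anything here.
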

%%%%%%%%%%%%%%%%%%%%%%%%%%%%%%%%%%%%%%%%%%%%%
\begin{proof}
Note that
\[
K_v^\prime(a^{-1}\overline{u})=t^{\ell(w_0)}\Psi K_v^\circ(t^{n-1}x)|_{x=a^{-1}\overline{u}}=
t^{\ell(w_0)}\bigl(H_{w_0}^\circ K_v^\circ\bigr)(a^{-1}\widetilde{u}^{-1})
\]
by \eqref{tildeversusnormal}.
By \eqref{w0twistedsymmetry} the right hand side is invariant under the interchange of $u$
and $v$.
\end{proof}
%%%%%%%%%%%%%%%%%%%%%%%%%%%%%%%%%%%%%%%%%%%%%%

%%%%%%%%%%%%%%%%%%%%%%%%%%%%%%%%%%%%%%%%%%%%%
\subsection{Binomial formula and dual binomial formula}\label{Osection}
%%%%%%%%%%%%%%%%%%%%%%%%%%%%%%%%%%%%%%%%%%%%
In \cite{S-bin} the existence and uniqueness of $O_\alpha$ was used to prove the following binomial theorem \cite[Thm. 1.3]{S-bin}. Define for $\alpha, \beta\in\mathcal{C}_n$ the generalized binomial coefficient by
\begin{equation}\label{binomcoeff}
\left[ \begin{matrix} \alpha\\ \beta\end{matrix}\right]_{q,t}:=\frac{G_\beta(\overline{\alpha})}{G_\beta(\overline{\beta})}.
\end{equation}
Applying the automorphism $\iota$ of $\mathbb{F}$ to \eqref{binomcoeff} we get
\[
\left[\begin{matrix} \alpha\\ \beta\end{matrix}\right]_{q^{-1},t^{-1}}=\frac{G_\beta^\circ(\overline{\alpha}^{-1})}{G_\beta^\circ(\overline{\beta}^{-1})}.
\]
%%%%%%%%%%%%%%%%%%%%%%%%%%%%%%%%%%%%%%%%%%%%
\begin{thm}
For $\alpha,\beta\in\mathcal{C}_n$ we have the binomial formula
\begin{equation}\label{binomialformula}
K_\alpha(ax)=\sum_{\beta\in\mathcal{C}_n} a^{|\beta|}\left[\begin{matrix} \alpha\\ \beta\end{matrix}\right]_{q^{-1},t^{-1}}\frac{G_\beta^\prime(x)}{G_\beta(a\tau)}.
\end{equation}
\end{thm}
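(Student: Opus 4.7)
My plan is to verify the binomial formula by the standard interpolation strategy: show that both sides are polynomials in $x$ of degree at most $|\alpha|$, and that they agree at the interpolation points $\widetilde{\gamma}$, $\gamma\in\mathcal{C}_n$. The existence and uniqueness of the $O$-polynomial $O_\alpha$ guarantees that such a polynomial is uniquely determined by its values at $\overline{\gamma}^{-1}=t^{n-1}w_0\widetilde{\gamma}$ (see \eqref{tildeversusnormal}), equivalently at $\widetilde{\gamma}$.

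The degree bound for the RHS, since $\deg G_\beta^\prime=|\beta|$, reduces to the vanishing $\left[\begin{matrix}\alpha\\ \beta\end{matrix}\right]_{q^{-1},t^{-1}}=0$ for $|\beta|>|\alpha|$, which follows by applying $\iota$ to the extra vanishing property $G_\beta(\overline{\alpha})=0$. By duality (Theorem \ref{THMsymmetry}), the LHS at $x=\widetilde{\gamma}$ equals $K_\gamma(a\widetilde{\alpha})$; the primed vanishing $G_\beta^\prime(\widetilde{\gamma})=0$ for $|\beta|>|\gamma|$ truncates the RHS at $\widetilde{\gamma}$ to a finite sum over $|\beta|\leq|\gamma|$. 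Thus the claim reduces to the evaluated identity
\[
K_\gamma(a\widetilde{\alpha})=\sum_{\beta\in\mathcal{C}_n} a^{|\beta|}\left[\begin{matrix}\alpha\\ \beta\end{matrix}\right]_{q^{-1},t^{-1}}\frac{G_\beta^\prime(\widetilde{\gamma})}{G_\beta(a\tau)}
\]
for all $\alpha,\gamma\in\mathcal{C}_n$.

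I would prove this by induction on $|\gamma|$. The base case $\gamma=0$ is immediate: since $\widetilde{0}=\tau$, the defining vanishing forces $G_\beta^\prime(\tau)=0$ for $\beta\neq 0$, so only the $\beta=0$ term survives, and both sides equal $1$. The inductive step mirrors Steps 1 and 3 of the proof of Theorem \ref{THMsymmetry}: an $S_n$-rearrangement handled via Proposition \ref{PROPactionsame}(1) reduces to the case $\gamma_n>0$, so that $\gamma=\eta^\natural$ with $\eta=\gamma^\sharp$ of strictly smaller size; one then applies Proposition \ref{PROPactionsame}(3) on the LHS together with a companion recursion for $G_\beta^\prime(\widetilde{\gamma})$ under $\gamma\mapsto\gamma^\sharp$ on the RHS. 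The main obstacle is producing this companion recursion in a form compatible with the $\Phi$-recursion for $K_\gamma$; for this, one rewrites $G_\beta^\prime$ via Theorem \ref{THMprime} as $t^{(1-n)|\beta|+I(\beta)}\Psi G_\beta^\circ(t^{n-1}\cdot)$ and exploits the intertwining relations \eqref{int} satisfied by $\Psi=w_0 H_{w_0}^\circ$ together with \eqref{tildeversusnormal}. Once the two recursions align, the inductive step closes by a direct computation parallel to the argument in the proof of Theorem \ref{THMsymmetry}.
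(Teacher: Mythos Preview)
The paper does not give its own proof of this theorem; it is quoted from \cite[Thm.~1.3]{S-bin}, where (as the paper notes at the start of \S\ref{Osection}) the argument rests on the existence and uniqueness of the $O$-polynomials. So there is no in-paper proof to compare against; what can be compared is your proposal with the \cite{S-bin} strategy as described.

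Your route is genuinely different: you use duality (Theorem~\ref{THMsymmetry}), a new result of the present paper, to reduce to an evaluated identity indexed by $\gamma$ and then attempt an induction on $|\gamma|$ modelled on Steps~1 and~3 of the proof of Theorem~\ref{THMsymmetry}. This is an attractive idea, and the reduction step (degree bound, evaluation at $\widetilde{\gamma}$, base case $\gamma=0$) is correct.

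The gap is in the inductive step. You propose to apply Proposition~\ref{PROPactionsame}(3) on the left-hand side to pass from $\gamma=\eta^\natural$ to $\eta$, keeping $\alpha$ fixed. But the $\Phi$-recursion does not do this: writing it out (equivalently, specialising \eqref{three1}) gives
\[
(a\overline{\eta}_1^{-1}-1)\,K_{\eta^\natural}(a\widetilde{\alpha})
=(a\overline{\alpha}_1^{-1}-1)\,K_\eta\bigl(a\widetilde{\alpha^\natural}\bigr),
\]
so the second argument is shifted from $\alpha$ to $\alpha^\natural$ at the same time. Your sketch treats $\alpha$ as a spectator, which it is not. On the right-hand side the situation is no better: the ``companion recursion'' you need would have to relate $G_\beta'(\widetilde{\gamma})$ to values at $\widetilde{\eta}$, but passing through Theorem~\ref{THMprime} turns this into $(H_{w_0}^\circ G_\beta^\circ)(\overline{\gamma}^{-1})$, and the affine operator $\Delta^\circ$ (needed to move $\gamma\mapsto\gamma^\sharp$) does not commute with $H_{w_0}^\circ$; by \eqref{int} it intertwines through a full product $\overline{H}_{n-1}^\circ\cdots\overline{H}_1^\circ(\Delta^\circ)^{-1}H_{n-1}^\circ\cdots H_1^\circ$, which mixes the $\beta$-index nontrivially. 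You flag this as ``the main obstacle'' but do not resolve it, and it is not a routine computation.

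If you want to keep the duality-based viewpoint, a cleaner path is to use Proposition~\ref{propO} directly: duality gives $O_\alpha(x)=K_\alpha(t^{1-n}aw_0x)$, hence $K_\alpha(ax)=O_\alpha(t^{n-1}w_0x)$, and the \cite{S-bin} expansion of $O_\alpha$ in the $G_\beta^\circ$-basis (which is genuinely triangular, since $G_\beta^\circ(\overline{\gamma}^{-1})=0$ unless $\beta\subseteq\gamma$) then yields the binomial coefficients; Theorem~\ref{THMprime} converts the $G_\beta^\circ$-expansion into the $G_\beta'$-form. This avoids the two-index recursion entirely.
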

%%%%%%%%%%%%%%%%%%%%%%%%%%%%%%%%%%%%%%%%%%%%
\begin{rema}
{\bf 1.} Note that the sum in \eqref{binomialformula} is finite, since the generalized
binomial coefficient \eqref{binomcoeff} is zero
unless $\beta\subseteq\alpha$,
with $\beta\subseteq\alpha$ meaning $\beta_i\leq\alpha_i$ for $i=1,\ldots,n$.\\
{\bf 2.} By Corollary \ref{CORrelev} and \eqref{defprime} the binomial formula \eqref{binomialformula} can be alternatively written as
\begin{equation}\label{binomialformulanew}
\begin{split}
K_\alpha(ax)&=\sum_{\beta\in\mathcal{C}_n}\tau_\beta^{-1}\left[\begin{matrix} \alpha\\ \beta\end{matrix}\right]_{q^{-1},t^{-1}}K_\beta^\prime(x)\\
&=\sum_{\beta\in\mathcal{C}_n}\frac{K_\beta^\circ(\overline{\alpha}^{-1})K_\beta^\prime(x)}{\tau_\beta K_\beta^\circ(\overline{\beta}^{-1})}\\
&=t^{\ell(w_0)}\sum_{\beta\in\mathcal{C}_n}\frac{K_\beta^\circ(\overline{\alpha}^{-1})\Psi K_\beta^\circ (t^{n-1}x)}{\tau_\beta K_\beta^0(\overline{\beta}^{-1})}
\end{split}
\end{equation}
with $\Psi=w_0H_{w_0}^\circ$ (note that the dependence on $a$ in the right hand side of \eqref{binomialformulanew} is through the normalization factors of the interpolation polynomials
$K_\beta^\circ(x)$ and $K_\beta^\prime(x)$).\\
{\bf 3.} The binomial formula \eqref{binomialformula} and Theorem \ref{THMprime} imply the twisted duality \eqref{w0twistedsymmetry} of $K_\alpha$ as follows.
By the identity $H_{w_0}\Psi=w_0$ the binomial formula \eqref{binomialformulanew} implies the finite expansion
\[
\bigl(H_{w_0}K_\alpha\bigr)(ax)=t^{\ell(w_0)}\sum_\beta \frac{K_\beta^\circ(\overline{\alpha}^{-1})K_\beta^\circ(t^{n-1}w_0x)}{\tau_\beta K_\beta^\circ(\overline{\beta}^{-1})}.
\]
Substituting $x=\widetilde{\gamma}$ and using \eqref{tildeversusnormal}
 we obtain
\begin{equation*}
\bigl(H_{w_0}K_\alpha\bigr)(a\widetilde{\gamma})=\sum_{\beta\in\mathcal{C}_n}\frac{K_\beta^\circ(\overline{\alpha}^{-1})K_\beta^\circ(\overline{\gamma}^{-1})}{\tau_\beta K_\beta^\circ(\overline{\beta}^{-1})}.
\end{equation*}
The right hand side is manifestly invariant under interchanging $\alpha$ and $\gamma$,
which is equivalent to twisted duality \eqref{w0twistedsymmetry}.
\end{rema}

In \cite[\S 4]{LRW} it is remarked that an explicit identity relating $G_\alpha^\prime$ and $G_\alpha$ 
is needed to provide a proof of the dual binomial formula \cite[Thm. 4.4]{LRW} as a direct consequence of the binomial formula \eqref{binomialformula}.
We show here that Theorem \ref{THMprime} is providing the required identity. Instead of Theorem \ref{THMprime} we use its normalized version, 
encoded by \eqref{defprime}.

The dual binomial formula \cite[Thm. 4.4]{LRW} in our notations reads as follows. 
%%%%%%%%%%%%%%%%%%%%%%%%%%%%%%%%%%%%%%%
\begin{thm}
\label{THMlrw} 
For all $\alpha\in\mathcal{C}_n$ we have
\begin{equation}\label{dualbinomialformula}
K_\alpha^\prime(x)=\sum_{\beta\in\mathcal{C}_n}\tau_\beta \left[ \begin{matrix} \alpha\\ \beta\end{matrix}\right]_{q,t}K_\beta(ax).
\end{equation}
\end{thm}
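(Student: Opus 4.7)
The plan is to derive Theorem \ref{THMlrw} from the binomial formula \eqref{binomialformulanew} by two operations: first $\iota$-dualize it to swap $K_\beta^\prime(x)$ for $K_\beta^{\prime\circ}(x)$ (and $\tau_\beta^{-1}$ for $\tau_\beta$), then apply the operator $t^{\ell(w_0)}\Psi$ with $\Psi = w_0 H_{w_0}^\circ$ to strip off the $\circ$ and the prime by means of \eqref{defprime}.

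Concretely, applying $\iota$ to \eqref{binomialformulanew} (and using $\iota(\tau_\beta) = \tau_\beta^{-1}$, which is immediate from the definition $\tau_\beta = (-1)^{|\beta|}q^{n^\prime(\beta)}t^{-n(\beta^+)}$) yields
\[
K_\alpha^\circ(a^{-1}x) = \sum_{\beta\in\mathcal{C}_n}\tau_\beta \left[\begin{matrix}\alpha\\ \beta\end{matrix}\right]_{q,t} K_\beta^{\prime\circ}(x).
\]
Substituting $x \mapsto at^{n-1}x$ gives
\[
K_\alpha^\circ(t^{n-1}x) = \sum_{\beta\in\mathcal{C}_n}\tau_\beta \left[\begin{matrix}\alpha\\ \beta\end{matrix}\right]_{q,t} K_\beta^{\prime\circ}(at^{n-1}x).
\]
Applying the $x$-operator $t^{\ell(w_0)}\Psi$ to both sides, the left-hand side collapses to $K_\alpha^\prime(x)$ by \eqref{defprime}; the dual binomial formula then follows provided
\[
t^{\ell(w_0)}\Psi K_\beta^{\prime\circ}(at^{n-1}x) = K_\beta(ax).
\]

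To check this last identity, observe first that $\Psi = w_0 H_{w_0}^\circ$ is homogeneous of degree zero in $x$ (each $H_i^\circ$ is), so it commutes with the rescaling $x\mapsto cx$; it therefore suffices to compute $\Psi K_\beta^{\prime\circ}(y)$ for a free variable $y$. The $\iota$-version of \eqref{defprime} reads $K_\beta^{\prime\circ}(y) = t^{-\ell(w_0)}\Psi^\circ K_\beta(t^{1-n}y)$ with $\Psi^\circ = w_0 H_{w_0}$, whence
\[
t^{\ell(w_0)}\Psi K_\beta^{\prime\circ}(y) = \Psi\Psi^\circ K_\beta(t^{1-n}y).
\]
Setting $y = at^{n-1}x$, the dual binomial formula is thus reduced entirely to the operator identity $\Psi\Psi^\circ = \mathrm{id}$.

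The main obstacle is verifying $\Psi\Psi^\circ = \mathrm{id}$. The natural route is through the intermediate claim $w_0 H_{w_0}^\circ w_0 = H_{w_0}^{-1}$, after which $\Psi\Psi^\circ = w_0(H_{w_0}^\circ w_0)H_{w_0} = w_0^2 H_{w_0}^{-1} H_{w_0} = \mathrm{id}$ is immediate. To prove that claim, fix a reduced expression $w_0 = s_{i_1}\cdots s_{i_N}$; applying the $\iota$-version of \eqref{Iw0}, namely $w_0 H_i^\circ w_0 = H_{n-i}^{-1}$, to each factor gives $w_0 H_{w_0}^\circ w_0 = H_{n-i_1}^{-1}\cdots H_{n-i_N}^{-1} = (H_{n-i_N}\cdots H_{n-i_1})^{-1}$. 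Since the Dynkin-diagram automorphism $s_i \mapsto s_{n-i}$ is implemented by conjugation by $w_0$ in type $A_{n-1}$, we have $s_{n-i_N}\cdots s_{n-i_1} = w_0(s_{i_N}\cdots s_{i_1})w_0 = w_0$ (using $w_0^2 = 1$), so this word is another reduced expression for $w_0$ and $H_{n-i_N}\cdots H_{n-i_1} = H_{w_0}$. This closes the loop; all remaining steps are formal manipulations with \eqref{binomialformulanew} and $\iota$.
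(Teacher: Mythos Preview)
Your proof is correct and follows essentially the same route as the paper's: apply $\iota$ to the binomial formula \eqref{binomialformulanew}, rescale $x\mapsto at^{n-1}x$, and then act by $\Psi=w_0H_{w_0}^\circ$, reducing everything to the operator identity $\Psi\Psi^\circ=\mathrm{id}$ (the paper phrases this as $w_0H_{w_0}\Psi=\mathrm{id}$, which is the same statement). Your explicit verification of $w_0H_{w_0}^\circ w_0=H_{w_0}^{-1}$ via \eqref{Iw0} is a nice addition that the paper leaves implicit.
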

%%%%%%%%%%%%%%%%%%%%%%%%%%%%%%%%%%%%%%%%
The starting point of the alternative proof of \eqref{dualbinomialformula} is the binomial formula 
in the form
\[
K_\alpha(ax)=t^{\ell(w_0)}\sum_{\beta\in\mathcal{C}_n}\frac{G_\beta^\circ(\overline{\alpha}^{-1})\Psi K_\beta^\circ (t^{n-1}x)}{\tau_\beta G_\beta^\circ(\overline{\beta}^{-1})},
\]
see \eqref{binomialformulanew}. Replace $(a,x,q,t)$ by $(a^{-1},at^{n-1}x,q^{-1},t^{-1})$ and act by $w_0H_{w_0}$ on both sides.
Since $w_0H_{w_0}\Psi=\textup{Id}$ we obtain
\[
\Psi K_\alpha^\circ(t^{n-1}x)=t^{-\ell(w_0)}\sum_\beta \tau_\beta \left[ \begin{matrix} \alpha\\ \beta\end{matrix}\right]_{q,t}K_\beta(ax).
\]
Now use \eqref{defprime} to complete the proof of \eqref{dualbinomialformula}.
%%%%%%%%%%%%%%%%%%%%%%%%%%%%%%%%%%%%%%%%

\begin{rema}
It follows from this proof of \eqref{dualbinomialformula} that the dual binomial formula \eqref{dualbinomialformula} can be rewritten as
\begin{equation}\label{dualbinomialformulanew}
\Psi K_\alpha^\circ(t^{n-1}x)=t^{-\ell(w_0)}\sum_{\beta}\frac{\tau_\beta K_\beta(\overline{\alpha})K_\beta(ax)}{K_\beta(\overline{\beta})}.
\end{equation}
\end{rema}

As observed in \cite[(4.11)]{LRW}, the binomial and dual binomial formula directly imply the orthogonality relations 
\[
\sum_{\beta\in\mathcal{C}_n}\frac{\tau_\beta}{\tau_\alpha}\left[\begin{matrix} \alpha\\ \beta\end{matrix}\right]_{q,t}\left[\begin{matrix} \beta\\ \gamma\end{matrix}\right]_{q^{-1},t^{-1}}=
\delta_{\alpha,\gamma}.
\]
Since $\left[\begin{matrix} \delta\\ \epsilon\end{matrix}\right]_{q, t}=0$ unless
$\delta\supseteq\epsilon$, the terms in the sum are zero unless $\gamma\subseteq\beta\subseteq\alpha$.

%%%%%%%%%%%%%%%%%%%%%%%%%%%%%%%%%%%%%%%%%%%%%%%

%%%%%%%%%%%%%%%%%%%%%%%%%%%%%%%%%%%%%%%%%%

\end{document}